%
%
%
%
%
%
\documentclass[smallcondensed]{svjour3}                     
\smartqed  
\usepackage{graphicx}
\usepackage{mathptmx}      
%
\usepackage{amssymb}
\usepackage{amsmath}
\usepackage{graphicx}
\usepackage{fullpage}
\usepackage{verbatim}
\usepackage{tikz}
\usepackage{float}

%

%
\journalname{Geometriae Dedicata}
\begin{document}

\title{Hyperbolic reflection groups associated to the quadratic forms $-3x_0^2 + x_1^2 + \ldots + x_n^2$}


\author{John Mcleod}


\institute{John Mcleod \at
  Department of Mathematical Sciences\\
  Durham University\\
  Science Laboratories\\
  Durham\\
  DH1 3LE\\
  Tel: +44-(0)191-334-3050\\
  Fax: +44-(0)191-334-3051\\
  \email{j.a.mcleod@durham.ac.uk}
}

\date{Received: date / Accepted: date}

\maketitle

\begin{abstract}
We determine the maximal hyperbolic reflection groups associated to the quadratic forms $-3x_0^2 + x_1^2 + \ldots + x_n^2$, $n \ge 2$, and present the Coxeter schemes of their fundamental polyhedra. These groups exist in dimensions up to 13, and a proof is given that in higher dimensions these quadratic forms are not reflective.
\keywords{hyperbolic \and non-cocompact \and arithmetic \and reflective \and lattice}
\end{abstract}

\section{Introduction}
\label{intro}
Discrete groups generated by reflections in hyperplanes in spaces of constant curvature were defined by Coxeter \cite{coxeter34}. He studied these groups in spherical and Euclidean spaces. Towards the end of the 1960s, Vinberg initiated a program which sought to classify hyperbolic reflection groups. A recent breakthrough in this area was achieved independently by Nikulin \cite{nikulin07} and Agol, Belolipetsky, Storm, and Whyte \cite{agolbelolipetskystormwhyte08}; they proved that there exist only finitely many conjugacy classes of the maximal arithmetic hyperbolic reflection groups in all dimensions. This important result relied on the work of many people, most notably the foundational papers of Vinberg.

In parallel to classification, there was interest in examples of hyperbolic reflection groups, particularly in higher dimensions. The most famous examples were constructed by Vinberg and Kaplinskaya \cite{vinbergkaplinskaja78} in dimensions $n = 18$, and 19, and then by Borcherds \cite{borcherds87} for $n = 21$. We refer to the book by Conway and Sloane \cite{conwaysloane99} for an extended discussion of the geometric and algebraic properties of these beautiful constructions.

The method of \cite{agolbelolipetskystormwhyte08} indicates that in order to construct higher dimensional examples of arithmetic reflection groups one has to look at the arithmetic lattices of small covolume. Arithmetic discrete subgroups of the minimal covolume were recently described by Belolipetsky \cite{belolipetsky04} (for even dimension $n \ge 4$), and Emery \cite{emery09} (for odd dimensions $n \ge 5$; cf. also Belolipetsky-Emery \cite{belolipetskyemery10}). The cases $n=2$, and 3 were known already: the former thanks to an influential paper by Siegel \cite{siegel45}; the latter thanks to Chinburg-Friedman \cite{chinburgfriedman00}, and Meyerhoff \cite{meyerhoff85}. 

In this paper we will concentrate our attention on non-cocompact arithmetic subgroups. This is the case which is represented by the examples of Borcherds, Kaplinskaya, and Vinberg mentioned above. By \cite{belolipetsky04} and \cite{belolipetskyemery10}, in most cases the minimal covolume non-cocompact arithmetic lattices are defined by quadratic forms $-x_0^2 + x_1^2 + \ldots + x_n^2$. This case was studied by Vinberg and Vinberg-Kaplinskaya. Vinberg also considered quadratic forms $-2x_0^2 + x_1^2 + \ldots + x_n^2$, which naturally follow on. A somewhat unexpected result of \cite{belolipetskyemery10} shows that for some $n$, however, the minimal covolume is associated with the quadratic forms $-3x_0^2 + x_1^2 + \ldots + x_n^2$. More precisely, this happens when $n = 2r - 1$, $r \ge 4$ and even, i.e. $n = 7, 11, 15, \ldots$. Therefore, in this paper, we will focus our attention on the quadratic forms 
\[f(x) = -3x_0^2 + x_1^2 + \ldots + x_n^2,\]
and associated arithmetic subgroups.

Our principle tool is an algorithm which was developed by Vinberg in order to find a maximal subgroup generated by reflections of a given arithmetic lattice \cite{vinberg72(2)}. We are going to apply the algorithm to the integral lattices of the form $O(f, \mathbb{Z})$. As a result we obtain several new higher dimensional examples which have remarkable combinatorial symmetry. We were also able to show that for $n \ge 14$ the lattices under consideration are not reflective, i.e. do not contain a subgroup of finite index which is generated by reflections. The following theorem gives a precise statement of the main results of the paper:

\begin{theorem}\label{theorem1}
The groups of integral automorphisms of the quadratic form $-3x_0^2 + x_1^2 + \ldots + x_n^2$ are reflective for $2\le n\le 13$ and non-reflective for $n \ge 14$. The Coxeter diagrams of the fundamental polyhedra of the corresponding maximal reflection subgroups for $n=2$ to $13$ are given on Figures~\ref{one_to_eight} and \ref{ge_nine}.
\end{theorem}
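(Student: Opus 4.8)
The plan is to apply Vinberg's algorithm separately in each regime. For the reflectivity half ($2\le n\le 13$), I would fix the lattice $L=\mathbb{Z}^{n+1}$ with the Gram matrix $\mathrm{diag}(-3,1,\ldots,1)$ and run Vinberg's inductive procedure: choose a basepoint $v_0$ inside the light cone (a vertex of maximal symmetry, e.g.\ the vector dual to the origin), and successively adjoin mirror vectors $e$ of reflections in $O(f,\mathbb{Z})$, ordered by increasing distance from $v_0$, subject to the usual acuteness condition that each new hyperplane make a non-obtuse angle with all previously selected ones. Because $L$ is integral, the candidate roots $e$ satisfy $(e,e)\in\{1,2,3,6\}$ (the divisors of twice the discriminant governing the crystallographic conditions), and the reflection $x\mapsto x-2\frac{(x,e)}{(e,e)}e$ must preserve $L$; this bounds the numerical search. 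I would carry this out dimension by dimension, recording the roots found and building the Coxeter diagram from the dihedral angles $\angle(H_i,H_j)$ read off from $\cos\theta_{ij}=-\frac{(e_i,e_j)}{\sqrt{(e_i,e_i)(e_j,e_j)}}$.

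The decisive step in the reflective range is Vinberg's termination criterion: the algorithm stops, and the group is reflective, precisely when the accumulated roots define a polyhedron $P$ of finite volume, which I would verify by checking that every edge subdiagram of the Coxeter graph corresponds to a genuine vertex, either ordinary (elliptic subdiagram of rank $n$) or ideal/cusp (parabolic subdiagram of rank $n-1$), so that $P$ has finitely many vertices all lying in $\overline{\mathbb{H}^n}$. This finite-volume check, using Vinberg's combinatorial characterization of the faces via the Gram matrix of the normal vectors, is what certifies that the reflection subgroup has finite index. The resulting diagrams are exactly those to be displayed in Figures~\ref{one_to_eight} and~\ref{ge_nine}.

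For the non-reflective half ($n\ge 14$), a finite search cannot prove a negative, so the strategy must change to an arithmetic obstruction. I would look for an infinite-order symmetry of the polyhedron that cannot be realized by reflections, the standard device being to find, after running the algorithm far enough, a fundamental polyhedron whose group of combinatorial automorphisms (equivalently, the symmetries of its Coxeter diagram fixing $v_0$) contains an element of infinite order, or equivalently to exhibit an infinite sequence of roots forced by the acuteness condition that never close up into a finite-volume cell. Concretely I would isolate an ``escaping'' subdiagram --- a chain of roots whose pairwise angles and edge labels grow without producing an elliptic or parabolic terminating vertex --- and show via Nikulin's or Vinberg's non-reflectivity criterion (the presence of a Lann\'er-free, non-terminating infinite-order automorphism of the associated hyperbolic reflection subsystem) that no finite collection of mirrors can bound a finite-volume polyhedron.

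The hard part will be the non-reflectivity argument for $n\ge 14$: bounding an inherently infinite process requires a uniform, dimension-independent obstruction rather than case-by-case computation, and I expect the main obstacle to be extracting from the $n=14$ computation a subgroup or infinite root configuration that persists in all higher dimensions --- so that one finite certificate at the threshold dimension propagates upward, via the embedding of the $n=14$ form as a sublattice of the $n$-dimensional form, to rule out reflectivity for every $n\ge 14$ at once.
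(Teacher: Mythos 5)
Your treatment of the reflective range $2\le n\le 13$ matches the paper's: run Vinberg's algorithm from the basepoint dual to $v_0$, restrict candidate roots by the crystallographic condition to $(e,e)\in\{1,2,3,6\}$ (with the extra divisibility constraint on the $k_i$ when $(e,e)=3$ or $6$), and certify termination by Vinberg's finite-volume criterion applied to the parabolic, Lann\'er and broken-line subdiagrams of the resulting Coxeter scheme. That half is fine.

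The non-reflectivity half contains a genuine gap. First, the obstruction you propose --- an infinite-order symmetry of the would-be fundamental polyhedron, or a vaguely described ``escaping'' chain of roots --- is never actually constructed, and it is not the mechanism the paper uses. The paper's argument is concrete and different: after four steps of the algorithm in dimension $n\ge 14$ one finds a parabolic subdiagram $\Gamma_p=\tilde{E}_6+\tilde{E}_6$ of rank $12$; by Vinberg's criterion this must be a connected component of a parabolic subdiagram of rank $n-1$, so any further root contributing to the completion must be \emph{orthogonal} to every root of $\Gamma_p$. Solving those linear conditions forces the root into the one-parameter family $e=(2q+p)v_0+3q(v_1+v_2)+p(v_3+\cdots+v_8)+q(v_9+\cdots+v_{14})+\sum_{i\ge 15}k_iv_i$ with $|e|^2=3(p-2q)^2+\sum_{i\ge 15}k_i^2$, and a short case analysis (for $n=14,15,16,\ge 17$, where the tail coefficients $k_i$, $i\ge 15$, are successively forced to vanish) shows this norm can never lie in $\{1,2,3,6\}$ compatibly with the divisibility constraint. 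Hence the parabolic subgraph can never be completed and the polyhedron has infinite volume. Your plan gives no comparable finite certificate.

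Second, your proposed reduction ``one finite certificate at $n=14$ propagates upward via the embedding of the $n=14$ form as a sublattice of the $n$-dimensional form'' is unjustified: reflectivity is not monotone under such embeddings, and no such inheritance principle is available (nor used in the paper). What actually makes the argument uniform in $n$ is that the \emph{same} rank-$12$ parabolic subdiagram $\Gamma_p$ and the same orthogonality/norm computation recur verbatim in every dimension $n\ge 14$; the dependence on $n$ enters only through the elliptic tail $B_{n-14}$, which is handled by the explicit case split above. Without replacing your propagation step by something of this kind, the proof does not close for $n\ge 15$.
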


The paper is organised as follows. In Section~\ref{background} we describe Vinberg's algorithm, along with two representations of the fundamental polyhedron of a reflective lattice: the Gram matrix and the Coxeter scheme. In Section~\ref{finitevolume} we give a necessary and sufficient condition for a hyperbolic polyhedron to have finite volume (or be bounded). In Section~\ref{results} we apply the two previous sections to the quadratic form $-3x_0^2 + x_1^2 + \ldots + x_n^2$ and obtain examples of discrete reflection groups in dimensions $n = 2$ to 13, thereby proving the first part of Theorem~\ref{theorem1}. We also include two corollaries in which we obtain a geometric description of some of the fundamental polyhedra. Finally, in Section~\ref{noexamplesinhigherdimensions} we complete the proof by showing that there are no higher dimensional reflective lattices associated to this quadratic form.

\section{Background}\label{background}
Let $\{v_0, v_1, \ldots, v_n\}$ be a basis of an $(n+1)$-dimensional vector space $E^{(n,1)}$ with the scalar multiplication of signature $(n, 1)$, given by the quadratic form 
\begin{equation} \label{quad_form}
-\varphi x_0^2 + x_1^2 + \ldots + x_n^2,
\end{equation}
where $\varphi$ is a positive integer. Consider
\[\{v \in E^{(n,1)} | (v, v) < 0\} = \mathfrak{C} \cup (-\mathfrak{C}),\]
where $\mathfrak{C}$ is an open convex cone. The \emph{vector model} of hyperbolic space $\mathbb{H}^n$ is the set of rays through the origin in $\mathfrak{C}$, or $\mathfrak{C}/\mathbb{R}^+$, such that the motions of $\mathbb{H}^n$ are the orthogonal transformations of $E^{(n,1)}$. The group of integer-valued automorphisms of (\ref{quad_form}) is then a subgroup of $Isom \, \mathbb{H}^n$, and will be denoted $\Theta_n$. It can be shown that it is discrete and that it has finite covolume. In accordance with \cite{vinberg72(2)}, denote by $\Gamma$ the normal subgroup of $\Theta_n$ generated by reflections, and $P$ a $\Gamma$-cell. Then
\begin{equation}\label{sdproduct}
\Theta_n = \Gamma \rtimes H
\end{equation}
where $H$ is a subgroup of the symmetry group of $P$.

In the vector model of $\mathbb{H}^n$, a hyperplane is given by the set of rays in $\mathfrak{C}$ which are orthogonal to a vector $e$ of positive length in $E^{(n,1)}$, and contained in a hyperbolic subspace of $E^{(n,1)}$. A hyperplane $\Pi_e$ defines two \emph{halfspaces}, $\Pi_e^+$ and $\Pi_e^-$, and a reflection which will be denoted $R_e$. For brevity, a hyperplane associated to a vector $e_i$ will be denoted $\Pi_i$. For $R_e \in \Gamma$, $e$ must have rational coefficients. Furthermore, the vector $e$ may be scaled such that all the coefficients are integral and do not have any common divisors. 

Let $e= \sum_{i=0}^nk_iv_i$. Then the action of $R_e$ on the basis vectors can be written as:
\begin{equation}\label{reflects}
  R_e v_j = \left\{
  \begin{array}{lr}
    v_j - \frac{2 k_j}{(e, e)}e, &  j > 0,\\
    v_j + \frac{2 \varphi k_j}{(e, e)}e, &  j = 0.
   \end{array}\right.
\end{equation}

There is a further condition for $R_e$ to be an element of $\Gamma$, namely the so-called \emph{Crystallographic condition}: Any pair of reflections $\alpha, \beta \in \Gamma$ must satisfy
\begin{equation}\label{crystcondition}
\frac{2(\alpha, \beta)}{(\beta, \beta)} \in \mathbb{Z}, 
\end{equation}
with respect to the quadratic form.

By linearity, we only need to check that $R_e$ satisfy this condition when applied to the basis vectors. Therefore it is necessary that both $\frac{2 k_j}{(e, e)}$ and $\frac{2 \varphi k_0}{(e, e)}$ are integral.

Vinberg's algorithm constructs the fundamental polyhedron of the maximal hyperbolic reflection subgroup of the integral automorphism group of a quadratic form. It begins by considering the stabiliser subgroup $\Gamma_0 \subset \Gamma$ of a point $x_0 \in \mathbb{H}^n$. The \emph{polyhedral angle} at $x_0$ is defined by
\[P_0 = \bigcap_{i=1}^k \Pi_i^-,\]
with all the hyperplanes being \emph{essential} (not wholly contained within another hyperplane). There is a unique $\Gamma$-cell which sits inside $P_0$ and contains $x_0$, and it shall be denoted $P$.

The algorithm continues by constructing further $\Pi_i$ such that 
\[P = \bigcap_{i} \Pi_i^-,\]
with the $Pi_i$s being essential, ordered by increasing $\rho (x_o, Pi_i)$ (where $\rho$ denotes hyperbolic distance), and $\Pi_i^-$ denoting the halfspace which contains $x_0$. If the basis vector $v_0$ is chosen such that it lies on the ray containing $x_0$, then the hyperbolic distance between $x_0$ and the hyperplane $\Pi_e$ is given by
\begin{equation}\label{rhox0pie}
\sinh^2\rho(x_0,\Pi_e) = -\frac{(e,v_0)^2}{(e,e)(v_0,v_0)}.
\end{equation}

When constructing the hyperplanes $\Pi_i$ for $i \ge k+1$, they must be chosen such that $\Pi_i$ is the closest mirror of $\Gamma$ to $x_0$ whose halfspace $\Pi_i^-$ contains an inner point of the intersection of all previously constructed halfspaces (this is equivalent to the normal vector $e_i$ having negative inner product with all previous normal vectors, with respect to the form (\ref{quad_form})). From (\ref{rhox0pie}), it is clear that finding the closest mirror is equivalent to minimising
\begin{equation}\label{minimise}
\frac{(e,v_0)^2}{(e,e)} = \frac{k_0^2}{(e,e)}.
\end{equation}

This process may be of finite or infinite length, and the termination criterion is given in Section~\ref{finitevolume}.

If the algorithm terminates then $P$ is an acute-angled polyhedron, and in fact it is a \emph{Coxeter polyhedron}. Recall that an acute-angled polyhedron is called a Coxeter polyhedron if all the dihedral angles at the intersections of pairs of faces are integer submultiples of $\pi$ (or zero). A complete presentation of an acute-angled polyhedron is given by a \emph{Gram matrix}. A Gram matrix $G = (g_{ij})$ is a symmetric matrix with entries:
\begin{equation*}
  g_{ij} = \left\{
  \begin{array}{ll}
    1 &  \; \text{ if } \; i = j,\\
    -\cos(\frac{\pi}{n}) &  \; \text{ if } \; \angle (\Pi_i, \Pi_j) = \frac{\pi}{n},\\
    -1 &  \; \text{ if } \; \angle (\Pi_i, \Pi_j) = 0,\\
    -\cosh(\rho(\Pi_i, \Pi_j)) &  \; \text{ if $\Pi_i$ and $\Pi_j$ do not intersect,}
   \end{array}\right.
\end{equation*}
where $\rho(\Pi_i, \Pi_j)$ is the minimum hyperbolic distance between the two hyperplanes. The entries of the Gram matrix may be computed directly from the normal vectors to the hyperplanes $\Pi_i$ as
\[g_{ij}=\frac{(e_i, e_j)}{(e_i, e_i)(e_j,e_j)}.\]

Another presentation of an acute-angled polyhedron which will be used is the Coxeter scheme. It is a graph which reproduces most of the information in the Gram matrix, with the exception of the distance between non-intersecting planes. Each vertex of a Coxeter scheme corresponds to a hyperplane, and the edges are as presented in Table~\ref{coxeteredges}.
\begin{table}[H]
\caption{\label{coxeteredges} The edges of a Coxeter diagram}
\centering
\begin{tabular}{l|l} 
  Type of edge & Corresponds to\\
  \hline
  comprised of $m-2$ lines, or labelled $m$ & a dihedral angle $\frac{\pi}{m}$\\
  a single heavy line & a ``cusp'', or a dihedral angle zero\\
  a dashed line & two divergent faces\\
  no line & a dihedral angle $\frac{\pi}{2}$
\end{tabular}
\end{table}

\begin{example}\label{simplexexample}
An example of a Coxeter scheme would be:
\begin{figure}[h!]
  \centering
  \begin{tikzpicture}
    \draw[thick] (0,0) -- (1, 0) node[above, xshift = -0.5cm]  {6};
    \draw[thick] (1,0) -- (2, 0);
    \draw[thick, double distance = 2pt] (2, 0) -- (3, 0);
    \filldraw[fill=white] (0, 0) circle (2pt); 
    \filldraw[fill=white] (1, 0) circle (2pt); 
    \filldraw[fill=white] (2, 0) circle (2pt); 
    \filldraw[fill=white] (3, 0) circle (2pt); 
  \end{tikzpicture}
\end{figure}

This corresponds to a noncompact simplex in 3 dimensional hyperbolic space with dihedral angles $\frac{\pi}{6}$, $\frac{\pi}{3}$, and $\frac{\pi}{4}$. The Gram matrix of a simplex can be recovered from its Coxeter scheme. In our case, we get
\[\begin{pmatrix} 1 & -\frac{1}{2} & 0 & -\frac{\sqrt{3}}{2}\\
 -\frac{1}{2} & 1 & -\frac{1}{\sqrt{2}} &  0\\
  0  & -\frac{1}{\sqrt{2}} & 1 & 0\\
 -\frac{\sqrt{3}}{2} & 0  & 0 &  1
    \end{pmatrix}.\]
\end{example}

\section{Determining whether an acute-angled polyhedron is of finite volume}\label{finitevolume}
In order to demonstrate that a Coxeter polyhedron is of finite volume, we appeal to a proposition of Vinberg \cite{vinberg72(2)}. The proposition is given in terms of properties of the Gram matrix, and it is reproduced here as Proposition~\ref{vin_prop_1}. The result can be reformulated in terms of the Coxeter scheme, and this is given below as Corollary~\ref{vin_unwritten_prop}.

The \em direct sum \em of matrices $A_1, \ldots, A_n$ is given by
\[\begin{pmatrix} A_1 &  &  & 0\\
    & A_2 &  &  \\
    &     & \ddots & \\
 0  &  &  &  A_n
    \end{pmatrix},\]
up to a permutation of the rows, and the same permutation of the columns. If a matrix $A$ cannot be presented as a direct sum of two non-empty matrices it is said to be \emph{irreducible}. Every symmetric matrix can be uniquely expressed as a direct sum of irreducible matrices, and these are known as its \emph{components}. The components of a matrix can be collected into three groups, denoted:
\begin{table}[h]
\centering
\begin{tabular}{l|l}
  $A^+$ & direct sum of all positive definite components of $A$,\\
  $A^0$ & direct sum of all degenerate non-negative definite components of $A$,\\
  $A^-$ & direct sum of all negative definite components of $A$.\\
\end{tabular}
\end{table}

If a Gram matrix is not positive definite, but all proper principal submatrices are, then the matrix is called \em critical. \em 
\begin{proposition}
A critical matrix is irreducible.
\end{proposition}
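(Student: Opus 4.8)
The plan is to argue by contradiction, exploiting the fact that positive-definiteness of a symmetric matrix is inherited blockwise across a direct-sum decomposition. Suppose $A$ is critical but \emph{not} irreducible. Then, after a permutation of the rows together with the same permutation of the columns, we may write $A = B \oplus C$ with both blocks non-empty.

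First I would record the elementary linear-algebra fact that a symmetric block-diagonal matrix is positive definite if and only if each of its diagonal blocks is. This is immediate from the associated quadratic form: decomposing a vector as $x = (y, z)$ according to the blocks, one has $x^{\top} A x = y^{\top} B y + z^{\top} C z$, and the two summands range independently over their respective coordinate subspaces, so $x^{\top} A x > 0$ for all $x \ne 0$ holds precisely when $y^{\top} B y > 0$ for all $y \ne 0$ and simultaneously $z^{\top} C z > 0$ for all $z \ne 0$.

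Next, since $A$ is critical it is by definition not positive definite, so by the fact just noted at least one of the blocks --- say $B$ --- fails to be positive definite. The key observation is then that $B$ is a \emph{proper} principal submatrix of $A$: it is the principal submatrix indexed by the coordinates of the first block, and it is proper precisely because the complementary block $C$ is non-empty. But criticality asserts that every proper principal submatrix of $A$ is positive definite, which contradicts the failure of $B$. Hence no such decomposition exists, and $A$ is irreducible.

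I do not expect a genuine obstacle here; the statement is essentially a bookkeeping consequence of the definitions. The only points that require a little care are, first, checking that positive-definiteness splits over the direct-sum blocks as a genuine equivalence (rather than an implication in one direction only), and second, confirming that each block of a nontrivial direct sum is an honest proper principal submatrix, so that the hypothesis on proper principal submatrices is actually applicable to it.
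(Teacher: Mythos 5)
Your proof is correct, and it takes a genuinely different (and in fact cleaner) route than the paper's. The paper also argues by contradiction from a decomposition $M = A_1 \oplus \cdots \oplus A_n$, but it derives the contradiction through determinants: each block is a proper principal submatrix, hence positive definite, hence has positive determinant, so $\det M > 0$; this is then set against the claim that a critical matrix has determinant zero. You instead use the blockwise splitting of the quadratic form, so that a non-positive-definite $A$ must have a non-positive-definite block, which is a proper principal submatrix --- contradicting criticality directly. Your version is preferable on two counts: it needs no determinants, and it avoids the paper's assertion that a critical matrix has determinant zero, which is not quite accurate as stated (by eigenvalue interlacing a critical matrix has exactly one non-positive eigenvalue, so its determinant is non-positive but may be strictly negative, as happens for Lann\'{e}r subschemes; the paper's contradiction still goes through with ``$\det M \le 0$'' in place of ``$\det M = 0$'', but your argument never has to engage with this point). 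The two steps you single out as needing care --- that positive definiteness is detected blockwise as a genuine equivalence, and that each block of a nontrivial direct sum is an honest proper principal submatrix --- are exactly the load-bearing observations, and both hold.
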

\begin{proof}
Assume that a critical matrix $M$ could be written as a direct sum of matrices $A_1 \oplus A_2 \oplus \ldots \oplus A_n$. The determinant of $M$ is then given by the product of the determinants of the $A_i$s. All proper, principal submatrices of $M$ are positive definite, so all the determinants of the $A_i$s will be positive, giving the determinant of $M$ as positive. However, $M$ is critical, so it must have determinant zero. Hence $M$ must be irreducible.
\end{proof}

Finally, let $P \subset \mathbb{H}^n$ be a non-degenerate, finite, acute-angled, convex polyhedron, with each face defined by a vector $e_i$, $i \in I$, and let $G$ be its Gram matrix. Let $S$ be a subset of the index set $I$. Define
\[K = \{v\in V |  (e_i, v) \le 0, \; \forall i \in I\}.\]
Then, for any set $S\subset I$, let
\[K_S=\{v \in K | (e_i, v) = 0 \text{ for } i \in S\}.\]

In the same way, define $G_S$ to be the submatrix of a matrix G defined by taking the rows and columns of $G$ indexed by the elements of $S$.

\begin{proposition}[Vinberg \cite{vinberg72(2)}]\label{vin_prop_1}
  The necessary and sufficient condition for the polyhedron $P$ to have finite volume is that, for any critical principal submatrix $G_S$ of the matrix $G$, either
\begin{enumerate}
\item if $G_S = G_S^0$, then there exists a $T \supset S$ such that $G_T = G_T^0$ and $rank \; G_T = n-1$, or
\item if $G_S=G_S^-$, then $K_S = \{0\}$
\end{enumerate}
\end{proposition}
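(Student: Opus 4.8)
The plan is to work entirely in the vector model, translating the finite-volume question for $P$ into a statement about the convex cone $K$ and the linear algebra of the Gram matrix $G$. The starting point is the classical geometric characterisation, which I would either cite or reprove: a non-degenerate convex polyhedron in $\mathbb{H}^n$ has finite volume if and only if it is the convex hull of finitely many points of $\overline{\mathbb{H}^n} = \mathbb{H}^n \cup \partial\mathbb{H}^n$, i.e.\ of finitely many \emph{ordinary} (interior) and \emph{ideal} (boundary) vertices, with no face running off to infinity along a positive-dimensional piece of $\partial\mathbb{H}^n$. Under the cone correspondence, points of $\mathbb{H}^n$ are rays in the open cone $\mathfrak{C}$ (timelike directions) and ideal points are rays on the light cone (lightlike directions), while the vertices of $P$ are exactly the extreme rays of $K$ lying in the closed cone $\overline{\mathfrak{C}}$. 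The whole problem thus becomes: show that $P$ has finite volume iff every extreme ray of $K$ is timelike or lightlike, with each lightlike ray an isolated full-rank cusp, and iff no extreme ray is spacelike.

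Next I would set up the dictionary between a face $K_S$ and the submatrix $G_S$. For $S \subset I$ the face $K_S = \{v \in K : (e_i,v)=0,\ i\in S\}$ lies in $U_S = \mathrm{span}\{e_i : i \in S\}^{\perp}$, and the signature of the form on $U_S$ is controlled by that of $G_S$, since the ambient form has signature $(n,1)$ and hence every principal submatrix inherits at most one negative eigenvalue. Concretely: if $G_S$ is positive definite then $U_S$ is a hyperbolic subspace and $K_S$ can meet $\overline{\mathfrak{C}}$ (ordinary or ideal directions); if $G_S$ is indefinite of signature $(|S|-1,1)$ then $U_S$ is positive definite, so every nonzero vector of $K_S$ is spacelike; and if $G_S$ is degenerate positive semidefinite then $U_S$ is degenerate and its radical is a single lightlike ray, the candidate cusp. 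The reduction to \emph{critical} submatrices is the technical heart: using the lemma that a critical matrix is irreducible, together with the additivity of signature over the direct-sum decomposition into components and the eigenvalue interlacing forced by signature $(n,1)$, I would argue that any failure of finite volume is detected by a minimal offending face whose matrix is precisely a critical submatrix, and conversely that the two conditions, imposed on all critical $G_S$, force good behaviour of every face.

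The case analysis then matches the two alternatives of the proposition. If $G_S = G_S^0$ (an irreducible Euclidean, i.e.\ parabolic, configuration), the radical of $U_S$ determines an ideal point fixed by the faces in $S$, and the associated cusp has finite volume exactly when the cross-section cut on the horosphere $\mathbb{E}^{n-1}$ is compact; this occurs iff the faces through that ideal point form an all-parabolic subdiagram of full rank $n-1$, which is precisely the existence of $T \supset S$ with $G_T = G_T^0$ and $\mathrm{rank}\,G_T = n-1$. Thus every ``partial cusp'' must extend to a full one. If instead $G_S = G_S^-$ (the critical matrix being of the indefinite, hyperbolic type), then as observed $K_S$ sits in a spacelike subspace, so any nonzero vector there is a spacelike extreme ray along which $P$ escapes to infinity with positive volume; finite volume therefore forces $K_S = \{0\}$, and conversely. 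Combining the two cases over all critical submatrices yields the stated criterion.

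The main obstacle I anticipate is the reduction step of the second paragraph: rigorously proving that it suffices to test only critical submatrices rather than all faces. This requires showing on one hand that every bad face (a spacelike extreme ray, or a positive-dimensional ideal face) produces a critical witness of the defect, and on the other that the two conditions, imposed only on critical submatrices, propagate to control the geometry of all faces of $K$ simultaneously. The interplay between the signature interlacing, the irreducibility of critical matrices, and the combinatorics of how parabolic subdiagrams nest inside larger ones is where the argument is most delicate; the case analysis of the third paragraph is comparatively routine once the dictionary above is in place.
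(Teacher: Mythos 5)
First, a point of orientation: the paper does not prove this proposition at all --- it is imported verbatim from Vinberg \cite{vinberg72(2)} and used as a black box, so there is no in-paper argument to compare yours against. Judged on its own terms, your outline follows what is in fact Vinberg's original route: pass to the cone $K$ in $E^{(n,1)}$, identify vertices and ideal vertices of $P$ with timelike and lightlike extreme rays of $K$, control the signature of the form on $U_S=\mathrm{span}\{e_i: i\in S\}^{\perp}$ by that of $G_S$ (using that the ambient signature $(n,1)$ admits only one negative eigenvalue), and split into the parabolic case (degenerate $G_S$, radical giving a cusp that must extend to a rank $n-1$ parabolic system) and the indefinite case (spacelike $K_S$, which must vanish). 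One small but worthwhile observation: as literally stated in the paper, the condition $G_S=G_S^-$ cannot occur for a critical Gram matrix, since its diagonal entries are $1$ and so no component is negative definite; the intended reading, which you silently and correctly adopt, is that $G_S$ is the indefinite (Lann\'er-type) critical matrix of signature $(|S|-1,1)$, as in Vinberg's original formulation.

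The genuine gap is the one you yourself flag and then do not close: the reduction showing that it suffices to test only the \emph{critical} principal submatrices. This is not a technicality to be deferred --- it is the content of the theorem. Without it you have only shown that each critical $G_S$ of parabolic or indefinite type imposes the stated local condition; you have not shown that every spacelike extreme ray of $K$, or every non-compact ideal face, is witnessed by some critical submatrix, nor that the two conditions, imposed only on critical submatrices, propagate (via Proposition~\ref{vin_prop_2}-type splitting arguments and the irreducibility of critical matrices) to all faces of $K$ simultaneously. In addition, your opening characterisation of finite volume as ``convex hull of finitely many ordinary and ideal vertices'' is itself a nontrivial theorem about acute-angled polyhedra that must be proved or precisely cited. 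As it stands the proposal is a faithful plan for Vinberg's proof, with the correct dictionary and case analysis in place, but the two load-bearing steps are asserted rather than established.
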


Vinberg also provides an alternative approach which can simplify the verification of the second part of Proposition~\ref{vin_prop_1}:
\begin{proposition}[Vinberg \cite{vinberg72(2)}]\label{vin_prop_2}
Let the Gram matrix $G$ of the polyhedron $P$ be irreducible. If $S$ and $T \subset I$ are such that
\[G_{S\cup T} = G_S \oplus G_T, \; G_T = G_T^+,\]
then
\[K_{S\cup T} = \{0\}\implies K_S = \{0\}.\]
\end{proposition}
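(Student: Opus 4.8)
The plan is to prove the statement directly: assuming $K_{S\cup T}=\{0\}$, I will take an arbitrary $v\in K_S$ and show $v=0$. The natural device is orthogonal projection off the positive-definite block. Set $W=\operatorname{span}\{e_i:i\in T\}$. Since $G_T=G_T^{+}$ is positive definite, the form restricted to $W$ is non-degenerate, so $V=W\oplus W^{\perp}$ and I may write $v=u+w$ with $u\in W^{\perp}$ the orthogonal projection of $v$ and $w=\sum_{j\in T}c_je_j\in W$. I want to establish two things: first, that $u\in K_{S\cup T}$, which by hypothesis forces $u=0$ and hence $v=w\in W$; and second, that a nonzero element of $W\cap K_S$ would contradict the irreducibility of $G$. (We may assume $S\neq\varnothing$, since for $S=\varnothing$ one has $K_{S\cup T}=W^{\perp}\neq\{0\}$ and the hypothesis is vacuous.)

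For the first step the key input is a sign constraint on the coefficients $c_j$. Pairing $v=u+w$ with $e_j$ for $j\in T$ and using $u\in W^{\perp}$ gives $(e_j,w)=(e_j,v)\le 0$, i.e. $G_T c\le 0$ componentwise, where $c=(c_j)_{j\in T}$. Now $G_T$ is symmetric, positive definite, and, because $P$ is acute-angled, has non-positive off-diagonal entries $(e_i,e_j)\le 0$; such a matrix is a Stieltjes matrix, so $G_T^{-1}\ge 0$ entrywise and therefore $c=G_T^{-1}(G_Tc)\le 0$. With $c_j\le 0$ in hand, for any $i\notin T$ I have $(e_i,w)=\sum_{j\in T}c_j(e_i,e_j)\ge 0$, since both factors are non-positive; combined with $(e_i,v)\le 0$ this yields $(e_i,u)=(e_i,v)-(e_i,w)\le 0$. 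For $i\in S$ the direct-sum hypothesis $G_{S\cup T}=G_S\oplus G_T$ gives $(e_i,e_j)=0$ for all $j\in T$, hence $(e_i,w)=0$ and $(e_i,u)=(e_i,v)=0$; and for $i\in T$ we have $(e_i,u)=0$ automatically. Thus $u$ satisfies every inequality defining $K$ together with the equalities indexed by $S\cup T$, so $u\in K_{S\cup T}=\{0\}$, giving $u=0$ and $v=w\in W$.

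It remains to rule out a nonzero $v=\sum_{j\in T}c_je_j\in K_S$ with all $c_j\le 0$; let $T'=\{j\in T:c_j<0\}\neq\varnothing$ be its support. The crucial squeeze is that $(e_i,v)=0$ for every $i\notin T'$. Indeed, if $i\notin T$ this was shown above; and if $i\in T\setminus T'$ then $(e_i,v)=\sum_{j\in T'}c_j(e_i,e_j)\ge 0$ (each term a product of non-positive factors), while $v\in K$ forces $(e_i,v)\le 0$, so again $(e_i,v)=0$. For such $i$ the relation $0=(e_i,v)=\sum_{j\in T'}c_j(e_i,e_j)$ is a sum of non-negative terms, so each vanishes, and since $c_j\neq 0$ for $j\in T'$ we conclude $(e_i,e_j)=0$ for all $j\in T'$. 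Hence $e_i\perp e_j$ whenever $i\in I\setminus T'$ and $j\in T'$, which exhibits $G=G_{T'}\oplus G_{I\setminus T'}$ with both blocks non-empty (here $I\setminus T'\supseteq S\neq\varnothing$), contradicting the irreducibility of $G$. Therefore $v=0$, and $K_S=\{0\}$.

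I expect the main obstacle to be the coefficient-sign step: everything downstream is bookkeeping with the acute-angled inequalities $(e_i,e_j)\le 0$, but to start one needs $c_j\le 0$, and this rests on the inverse-nonnegativity of the Stieltjes matrix $G_T$, not merely on its positive-definiteness. The second delicate point is the sign squeeze forcing $(e_i,v)=0$ for all $i\notin T'$ rather than just $i\notin T$: only this upgrades the orthogonality to the full complement of the support $T'$, and it is precisely obtaining $T'$ (not $T$) as the orthogonal block that makes the irreducibility contradiction bite.
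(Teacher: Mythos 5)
Your proof is correct, but there is nothing in the paper to measure it against: Proposition~\ref{vin_prop_2} is imported from Vinberg's 1972 paper and stated here without proof. Your argument is essentially the classical one for faces of acute-angled polyhedral cones, and both load-bearing steps are sound. First, the orthogonal splitting $v=u+w$ off the positive-definite block $W=\operatorname{span}\{e_i:i\in T\}$, with the sign of the coefficients $c_j$ extracted from the inverse-nonnegativity of the Stieltjes matrix $G_T$ (symmetric, positive definite, non-positive off-diagonal --- the acute-angle condition is precisely what supplies the off-diagonal signs), correctly places $u$ in $K_{S\cup T}$ and kills it. Second, the squeeze forcing $(e_i,v)=0$ for every $i$ outside the support $T'$ (rather than merely outside $T$) is exactly what is needed to split $G$ as $G_{T'}\oplus G_{I\setminus T'}$ and contradict irreducibility. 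Two small points of hygiene. Your parenthetical disposal of $S=\varnothing$ is misstated: $K_{T}=K\cap W^{\perp}$, not $W^{\perp}$, so its non-vanishing is not immediate; but the remark is unnecessary, since your main argument already covers this case --- one still has $I\setminus T'\supseteq I\setminus T\neq\varnothing$ because the full Gram matrix has signature $(n,1)$ and cannot be positive definite, so the irreducibility contradiction goes through and the implication holds (vacuously, as it turns out). Also, the inclusion $I\setminus T'\supseteq S$ silently uses $S\cap T=\varnothing$; this is implicit in the direct-sum hypothesis $G_{S\cup T}=G_S\oplus G_T$, but it is worth saying explicitly.
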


These results can be reformulated in terms of the Coxeter diagrams. Then Proposition~\ref{vin_prop_1} becomes
\begin{corollary}\label{vin_unwritten_prop}
A polyhedron $P$ has finite volume if, for any subgraph $G_S$ of the diagram, either
\begin{enumerate}
\item if $G_S$ is parabolic, then it is a connected component of a parabolic subgraph $G_T$ of the diagram which has rank $n-1$,
\item if $G_S$ is a broken-line branch or Lann\'{e}r subgraph, then by removing vertices the diagram can be disconnected into $G_S$ and an elliptic subgraph $G_T$ such that \[rank \; G_S + rank \; G_T = n+1.\] This latter condition is sufficient but not necessary for the polyhedron $P$ to have finite volume.
\end{enumerate}
\end{corollary}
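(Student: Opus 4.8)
The plan is to prove the corollary by translating each clause of Proposition~\ref{vin_prop_1} from the language of the Gram matrix into the combinatorial language of Coxeter diagrams, using as the bridge the standard classification of critical matrices. Recall that a critical matrix is, by the preceding proposition, irreducible, hence corresponds to a \emph{connected} subdiagram; and such a matrix, being non-positive-definite while all its proper principal submatrices are positive definite, is either degenerate non-negative definite or negative definite. In diagram terms these two possibilities are exactly the connected \emph{parabolic} (affine) diagrams and the \emph{Lann\'er} diagrams, respectively. I would begin by recording this dictionary: a critical $G_S$ with $G_S = G_S^0$ corresponds to $G_S$ being a connected parabolic subgraph, while a critical $G_S$ with $G_S = G_S^-$ corresponds to $G_S$ being a broken-line (Lann\'er) subgraph.

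With the dictionary in hand, clause (1) translates almost verbatim. The hypothesis $G_T = G_T^0$ says $G_T$ is a direct sum of degenerate non-negative definite components, i.e.\ a disjoint union of connected parabolic subdiagrams; since $G_S$ is irreducible and $S \subseteq T$, the subdiagram $G_S$ is precisely one connected component of this parabolic $G_T$. The rank statement carries over because rank is additive over direct sums (a connected parabolic diagram on $m+1$ nodes has rank $m$), so the requirement $\operatorname{rank} G_T = n-1$ is preserved unchanged, yielding clause (1) of the corollary.

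The substance is in clause (2), where I would invoke Proposition~\ref{vin_prop_2} to replace the intrinsic condition $K_S = \{0\}$ by a checkable one. Given a Lann\'er subgraph $G_S$, suppose one can delete some nodes so that the remainder splits as a disjoint union of $G_S$ and an \emph{elliptic} subgraph $G_T$: diagrammatically ``disconnected'' is exactly $G_{S\cup T} = G_S \oplus G_T$, and ``elliptic'' is exactly $G_T = G_T^+$, so the hypotheses of Proposition~\ref{vin_prop_2} hold once I verify $K_{S\cup T} = \{0\}$. The key computation is that $K_{S\cup T} = \{0\}$ whenever the vectors $\{e_i : i \in S\cup T\}$ span all of $E^{(n,1)}$: by nondegeneracy of the form, the only vector orthogonal to every such $e_i$ is $0$, and $K_{S\cup T}$ consists of exactly such vectors. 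Spanning is equivalent to $\operatorname{rank} G_{S\cup T} = n+1$, and since the decomposition is a direct sum the rank is additive, $\operatorname{rank} G_{S\cup T} = \operatorname{rank} G_S + \operatorname{rank} G_T$. Hence $\operatorname{rank} G_S + \operatorname{rank} G_T = n+1$ forces $K_{S\cup T} = \{0\}$, and Proposition~\ref{vin_prop_2} then delivers $K_S = \{0\}$, which is what clause (2) of Proposition~\ref{vin_prop_1} demands.

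The main obstacle, and the reason the corollary flags this criterion as ``sufficient but not necessary'', is precisely this last implication: the rank identity guarantees $K_{S\cup T} = \{0\}$, but the converse can fail, since $K_S = \{0\}$ may hold for geometric reasons even when no elliptic $G_T$ disconnecting the diagram attains the full rank $n+1$. I would therefore phrase clause (2) as a sufficient condition only, and check the spanning/rank equivalence in the indefinite signature $(n,1)$ with care, keeping in mind that $K_{S\cup T}$ is cut out by the full system $(e_i,v)\le 0$ for $i\in I$ together with the orthogonality relations on $S\cup T$, so that it is the vanishing of the orthogonal complement of the span that does the work.
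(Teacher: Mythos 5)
Your argument is correct and follows exactly the route the paper intends: Corollary~\ref{vin_unwritten_prop} is stated there without proof as a reformulation of Propositions~\ref{vin_prop_1} and~\ref{vin_prop_2}, and your translation via the classification of critical (irreducible) principal submatrices into connected parabolic versus Lann\'er/broken-line subdiagrams, together with the observation that $\operatorname{rank} G_S + \operatorname{rank} G_T = n+1$ forces the $e_i$, $i \in S \cup T$, to span $E^{(n,1)}$ and hence $K_{S\cup T}=\{0\}$, is precisely the omitted verification. One small slip worth fixing: a critical non-parabolic matrix is \emph{indefinite} (all proper principal submatrices positive definite force exactly one negative eigenvalue), not negative definite --- the paper's own notation $G_S = G_S^-$ is loose on this point --- but this does not affect your identification of that case with the Lann\'er and broken-line subgraphs.
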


There is an equivalent condition for \emph{bounded} polyhedra, for which the quotient space is compact (cf. Bugaenko \cite{bugaenko92}, Proposition~$1.1$).

\begin{example}
Returning to the simplex of Example~\ref{simplexexample}, we may now show that it has finite volume. The Coxeter scheme has a single critical subscheme, $\tilde{G_2}$, whose vertices are coloured white, and this scheme is parabolic:
\begin{figure}[h!]
  \centering
  \begin{tikzpicture}
    \draw[thick] (0,0) -- (1, 0) node[above, xshift = -0.5cm]  {6};
    \draw[thick] (1,0) -- (2, 0);
    \draw[thick, double distance = 2pt] (2, 0) -- (3, 0);
    \filldraw[fill=white] (0, 0) circle (2pt); 
    \filldraw[fill=white] (1, 0) circle (2pt); 
    \filldraw[fill=white] (2, 0) circle (2pt); 
    \filldraw (3, 0) circle (2pt); 
  \end{tikzpicture}
\end{figure}

$\tilde{G_2}$ has rank $2$, and as $n=3$, the simplex has finite volume. The simplex is not bounded as $\tilde{G_2}$ is a parabolic subscheme.
\end{example}

\section{Results}\label{results}
In this section we use Vinberg's algorithm to construct maximal reflection subgroups of the groups of integral automorphisms of the quadratic forms
\begin{equation}
f(x) = -3x_0^2 + x_1^2 + \ldots + x_n^2.
\end{equation}

Following Vinberg, we fix a point $x_0 \in \mathbb{H}^n$, and the vectors which will be chosen as the polyhedral angle at $x_0$ are:
\begin{align*}
e_i &= -v_i + v_{i+1} \; \mathrm{for}\; 1 \le i < n,\\
e_n &= -v_n.
\end{align*}
Each new vector must have negative inner product with all previous vectors with respect to the form $f$. Therefore upon each new hyperplane corresponding to the normal vector $e= \sum_{i=0}^nk_iv_i$, there is the following ordering condition on the coefficients $k_i$, $i>0$:
\begin{equation}
k_1\ge k_2 \ge \ldots \ge k_n \ge 0. \label{k_i_ordering}
\end{equation}
The halfspace associated to each new hyperplane is chosen to be the halfspace which contains $x_0$. Therefore each new hyperplane corresponding to the normal vector $e$ must satisfy:
\[(e, v_0) < 0,\]
where the bilinear form $(,)$ is the inner product defined by $f$. This statement implies that
\begin{equation}
k_0 > 0. \label{positive_k_0}
\end{equation}
By the crystallographic condition (\ref{crystcondition}), we can determine the possible values of $(e,e)$. We obtain that $(e, e)$ could equal $3$ or $6$, as long as all the $k_j$ are divisible by $3$, therefore the possible values are:
\begin{equation}\label{epsilon}
(e, e) = 1, 2, 3, 6.
\end{equation}

We now prove the first part of Theorem~\ref{theorem1}:
\begin{proposition}\label{vector_prop}
Given the preceding conditions, the sets of vectors which are found by the algorithm are presented in Table~\ref{results_table}.
\end{proposition}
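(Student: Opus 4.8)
The plan is to execute Vinberg's algorithm directly and verify that the hyperplanes it selects coincide, level by level, with the vectors recorded in Table~\ref{results_table}. Two facts make this tractable: the quantity to be minimised at each step is $k_0^2/(e,e)$ (equation~\ref{minimise}), and for our form one has $(e,e) = -3k_0^2 + \sum_{i=1}^n k_i^2$. Hence fixing a trial value of $k_0$ and an admissible norm reduces the search to the finitely many integer solutions of $\sum_{i=1}^n k_i^2 = (e,e) + 3k_0^2$ subject to the ordering~\ref{k_i_ordering}, and sweeping through the possible values of $k_0^2/(e,e)$ in increasing order reproduces the vectors in exactly the order the algorithm emits them.

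First I would pin down the admissibility conditions. From~\ref{epsilon} the only a priori candidate norms are $(e,e) \in \{1,2,3,6\}$; combining the crystallographic condition~\ref{crystcondition} (namely $\frac{2k_j}{(e,e)} \in \mathbb{Z}$ for $j>0$ and $\frac{6k_0}{(e,e)} \in \mathbb{Z}$) with primitivity and a short congruence modulo $3$ shows that $(e,e)=3$ yields no primitive vector and that $3 \mid k_j$ for every $j \ge 1$ when $(e,e)=6$. I would then record that imposing $(e,e_j) \le 0$ against the initial vectors $e_1,\dots,e_n$ is precisely the monotonicity~\ref{k_i_ordering} together with $k_0>0$ (equation~\ref{positive_k_0}), while acuteness against each subsequently adopted normal supplies one further linear inequality. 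These constraints, with the norm equation, cut out a finite candidate set at each level.

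Second I would run the algorithm level by level. Starting from the polyhedral angle $e_1,\dots,e_n$, at each stage I take the next value of $k_0^2/(e,e)$, enumerate all primitive integer vectors realising it under the constraints above, discard any whose halfspace fails to contain an inner point of the current cell (equivalently, having positive inner product with an already adopted normal), and adjoin the survivors. Since $\sum_{i=1}^n k_i^2 = (e,e)+3k_0^2$ together with $k_1 \ge \cdots \ge k_n \ge 0$ bounds both the sizes and the number of nonzero coordinates, each level is a finite, and for small $k_0$ quite short, computation whose output I match against the table. Termination, and hence completeness of the listed set, is then read off from the finite-volume criterion of Section~\ref{finitevolume}: once the Gram matrix of the adopted hyperplanes has every critical (parabolic or Lann\'er) subdiagram satisfying Proposition~\ref{vin_prop_1}, equivalently Corollary~\ref{vin_unwritten_prop}, the cell $P$ has finite volume and the algorithm halts.

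The main obstacle is that the admissible families, and the level at which the finite-volume condition is first met, genuinely depend on $n$, so there is no single closed form valid throughout $2 \le n \le 13$; the real content is proving \emph{completeness} of each distance level, i.e. that no admissible mirror closer to $x_0$ than the last adopted one has been overlooked. For the smaller $n$ this can be done by hand, exploiting the symmetry group $H$ of~\ref{sdproduct} to organise the vectors into orbits, but for the larger values of $n$ the enumeration becomes extensive enough to warrant careful, essentially machine-assisted, bookkeeping before one can certify that the collected hyperplanes are exactly those of Table~\ref{results_table}.
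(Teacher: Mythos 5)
Your proposal follows essentially the same route as the paper: reduce each candidate norm and $k_0$ to the Diophantine equation $\sum_{i=1}^n k_i^2 = (e,e)+3k_0^2$ under the ordering and crystallographic constraints, sweep the values of $k_0^2/(e,e)$ in increasing order, and halt when the finite-volume criterion of Section~\ref{finitevolume} is met --- which is exactly how the paper's proof proceeds (it, too, works only the first few levels explicitly and omits the remaining bookkeeping). Your mod-$3$ congruence ruling out $(e,e)=3$ outright is a marginally cleaner version of the paper's case-by-case elimination of the candidate values $\frac{1}{3}$, $\frac{2}{3}$, and $\frac{4}{3}$ via the constraint $k_0\ge k_1$, but the substance is identical.
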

\begin{proof}
The algorithm searches for vectors $(k_0, k_1, \ldots, k_n)$ which satisfy the relations (\ref{k_i_ordering}), (\ref{positive_k_0}), and (\ref{epsilon}). The vector must have non-positive inner product with all vectors which have been found before it. Finally, if the length is divisible by $3$ then all the $k_i$, $i > 0$, must be also divisible by $3$. Of all the vectors which satisfy these conditions, the vector which minimises the quantity (\ref{minimise}) is chosen. This way we obtain the following vectors:

\begin{enumerate}
\item $v_0 + 3 v_1$

The vector which minimises (\ref{minimise}) should have length $6$ and $k_0 = 1$, so it remains to show that such a vector would satisfy the above constraints. By (\ref{epsilon}), if $(e, e) = 6$, all $k_i$s, $i > 0$, must be divisible by $3$. Under these conditions, a solution is sought for the equation
\[(e, e) + 3 k_0^2 = 9 = \sum_{i=1}^n k_i^2.\]
It is clear that this is solved by $k_i = 3$, $k_j = 0$, $j \neq i$, and by (\ref{k_i_ordering}), $i = 1$.

As all subsequent vectors must have negative inner product with this vector, another constraint is imposed:
\begin{equation}\label{k_0_ordering}
k_0 \ge k_1.
\end{equation}

For $n=3$ the algorithm terminates here, as this vector is sufficient to define an acute-angled polyhedron of finite volume.

\item $v_0 + v_1 + v_2 + v_3 + v_4$ and $v_0 + v_1 + v_2 + v_3 + v_4 + v_5$

After $\frac{1}{6}$, the next possible values of (\ref{minimise}) are as follows:
\begin{enumerate}
\item $\frac{1}{3}$: $k_0 = 1$, $(e, e) = 3$, 
\item $\frac{1}{2}$: $k_0 = 1$, $(e, e) = 2$, 
\item $\frac{2}{3}$: $k_0 = 2$, $(e, e) = 6$, 
\item $1$: $k_0 = 1$, $(e, e) = 1$.
\end{enumerate}

By the crystallographic condition, and (\ref{k_0_ordering}), the cases $\frac{1}{3}$ and $\frac{2}{3}$ are not possible. The second case, $\frac{1}{2}$, is realised by a solution to the Diophantine equation
\[(e, e) + 3 k_0^2 = 5 = \sum_{i=1}^n k_i^2,\]
where, by (\ref{k_0_ordering}) and (\ref{k_i_ordering}), all the $k_i$ must be bounded above by 1. Therefore this equation only has solutions in 5 or more dimensions, and produces
\begin{equation}\label{5dimensions}
v_0 + v_1 + v_2 + v_3 + v_4 + v_5.
\end{equation}

Now consider the final case in this list. This is realised by a solution to the Diophantine equation
\[(e, e) + 3 k_0^2 = 4 = \sum_{i=1}^n k_i^2,\]
where again, by (\ref{k_0_ordering}) and (\ref{k_i_ordering}), all the $k_i$ must be bounded above by 1. Therefore this equation has solutions in 4 or more dimensions, and produces
\begin{equation}\label{4dimensions}
v_0 + v_1 + v_2 + v_3 + v_4.
\end{equation}

A new vector is required in 4 dimensions to define an acute angled polyhedron of finite volume, and the vector (\ref{4dimensions}) is sufficient. In 5 or more dimensions the new vector must be taken to be (\ref{5dimensions}), as it has a smaller value of (\ref{minimise}). Note that as the inner product of (\ref{5dimensions}) and (\ref{4dimensions}) is positive, the two vectors are not mutually admissable.

In 5 or more dimensions, the additional constraint coming from $v_0 + v_1 + v_2 + v_3 + v_4 + v_5$ is:
\begin{equation} \label{3k_0gesum}
3 k_0 \ge k_1 + k_2 + k_3 + k_4 + k_5.
\end{equation}

\item $2 v_0 + v_1 + \ldots + v_{13}$ and $2( v_0 + v_1) + v_2 + \ldots + v_{11}$

After 1, the next possible values of (\ref{minimise}) are as follows:
\begin{enumerate}
\item $\frac{4}{3}$: $k_0 = 2$, $(e, e) = 3$, 
\item $\frac{3}{2}$: $k_0 = 3$, $(e, e) = 6$, 
\item 2: $k_0 = 2$, $(e, e) = 2$.
\end{enumerate}

Again, by the crystallographic condition, and (\ref{k_0_ordering}), the case $\frac{4}{3}$ is not possible. While the second case, $\frac{3}{2}$, is permitted by these two conditions, it requires a solution to the Diophantine equation
\[(e, e) + 3 k_0^2 = 33 = \sum_{i=1}^n k_i^2 = 9\sum_{i=1}^n {k_i^\prime}^2,\]
where $k_i = 3k_i^\prime$, and $9 \nmid 33$, so there are no solutions of this form.

Therefore consider the final case. This requires a solution to the Diophantine equation
\[(e, e) + 3 k_0^2 = 14 = \sum_{i=1}^n k_i^2.\]
There are two partitions of 14 into sums of squares respecting both (\ref{k_0_ordering}) and (\ref{3k_0gesum}), and they are:
\begin{enumerate}
\item 2, 1, 1, 1, 1, 1, 1, 1, 1, 1, 1;
\item 1, 1, 1, 1, 1, 1, 1, 1, 1, 1, 1, 1, 1, 1.
\end{enumerate}
The first of these represents the vector $2( v_0 + v_1) + v_2 + \ldots + v_{11}$, and as such arises in 11 or more dimensions, while the second, $2 v_0 + v_1 + \ldots + v_{13}$, does not appear until $n=13$. The inner product between them is zero, so they are mutually admissable.

\item Remaining vectors

The remaining vectors in Table~(\ref{results_table}) arise in the same way, and we omit the details.
\end{enumerate}

\end{proof}

The Coxeter schemes corresponding to the hyperbolic reflection groups found by this algorithm are presented in Figure~\ref{one_to_eight} and Figure~\ref{ge_nine}. The diagrams have been split in this way to highlight the different approaches which must be employed to demonstrate that the polyhedra have finite volume.
\begin{table}[h!]
\centering
\caption{Vectors produced by Vinberg's algorithm.}
  \begin{tabular}{c|c|c|c|c} \label{results_table}
    $i$ & $e_i$ & $(e,e)$ & $n$ & $\frac{k_0^2}{(e,e)}$\\
    \hline
    $n+1$ & $v_0 + 3 v_1$ & $6$ & $\ge 1$ & $0.167$\\
    $n+2$ & $v_0 + v_1 + v_2 + v_3 + v_4$ & $1$ & $4$ & $1$\\
    & $v_0 + v_1 + v_2 + v_3 + v_4 + v_5$ & $2$ & $\ge 5$ & $0.5$\\
    $n+3$ & $5 v_0 + 3(v_1 + v_2 + \ldots + v_9)$ & $6$ & $\ge 9$ & $4.167$\\
    $n+4$ & $2( v_0 + v_1) + v_2 + \ldots + v_{10}$ & $1$ & $10$ & $4$\\
    & $2( v_0 + v_1) + v_2 + \ldots + v_{11}$ & $2$ & $\ge 11$ & $2$\\
    $n+5$ & $3( v_0 + v_1 + v_2) + v_3 + \ldots + v_{12}$ & $1$ & $12$ & $9$\\
    & $3( v_0 + v_1 + v_2) + v_3 + \ldots + v_{13}$ & $2$ & $\ge 13$ & $4.5$\\
    $n+6$ & $5 v_0 + 3(v_1 + v_2 + \ldots + v_8) + v_9 + v_{10} + v_{11} + v_{12}$ & $1$ & $12$ & $25$\\
    & $5 v_0 + 3(v_1 + v_2 + \ldots + v_8) + v_9 + v_{10} + v_{11} + v_{12} + v_{13}$ & $2$ & $\ge 13$ & $12.5$\\
    $n+7$ & $2 v_0 + v_1 + \ldots + v_{13}$ & $1$ & $13$ & $4$\\
    $n+8$ & $8 v_0 + 6(v_1 + v_2 + v_3) + 3(v_4 + \ldots + v_{13})$ & $6$ & $\ge 13$ & $10.667$\\
    $n+9$ & $10 v_0 + 6(v_1 + \ldots + v_7) + 3(v_8 + \ldots + v_{13})$ & $6$ & $\ge 13$ & $16.667$\\
  \end{tabular}
\end{table}

The diagrams in Figure~\ref{one_to_eight} all have no broken-line branches or Lann\'{e}r subgraphs, and each parabolic subgraph is a connected component of a parabolic subgraph of rank $n-1$, so by Corollary~\ref{vin_unwritten_prop}, all have finite volume. This can be easily checked by inspection: removing the black vertex (where present) leaves a parabolic subscheme of rank $n-1$.

Note that in the case $n=2$ we get a Lann\'{e}r graph and hence a compact polyhedron, while for $n \ge 3$ the polyhedra are non-compact. This agrees with the well known compactness criterion which implies that for $n \ge 4$ a lattice defined by a quadratic form is non-cocompact if and only if the form is defined over $\mathbb{Q}$ (cf. \cite{limillson93}, section 1).

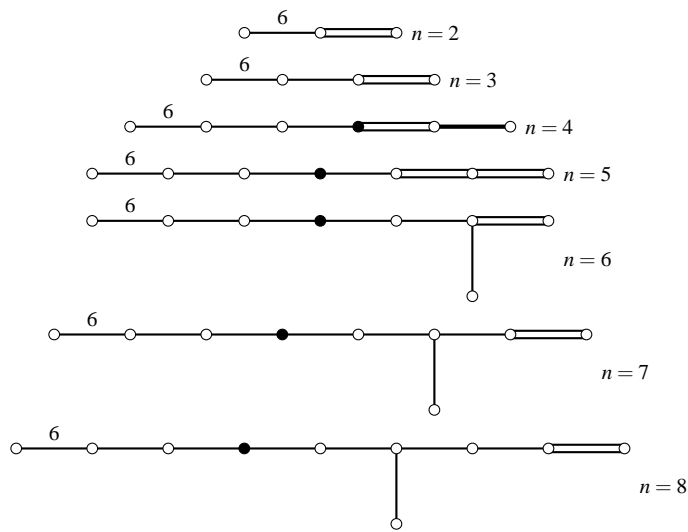
\begin{figure}[h!]
\centering
\caption{\label{one_to_eight} Coxeter diagrams of the fundamental polyhedra of the discrete reflection group corresponding to the automorphism groups of the quadratic form $-3x_0^2 + x_1^2 + \ldots + x_n^2$, for $n = 2$ to $8$.}
  \begin{tikzpicture}
    \draw[thick] (0,0) -- (1, 0) node[above, xshift = -0.5cm]  {6};
    \draw[thick, double distance = 2pt] (1, 0) -- (2, 0);
    \filldraw[fill=white] (0, 0) circle (2pt); 
    \filldraw[fill=white] (1, 0) circle (2pt); 
    \filldraw[fill=white] (2, 0) circle (2pt) node[right, xshift = 0.1cm] {$n=2$}; 
  \end{tikzpicture}

  \begin{tikzpicture}
    \draw[thick] (0,0) -- (1, 0) node[above, xshift = -0.5cm]  {6};
    \draw[thick] (1,0) -- (2, 0);
    \draw[thick, double distance = 2pt] (2, 0) -- (3, 0);
    \filldraw[fill=white] (0, 0) circle (2pt); 
    \filldraw[fill=white] (1, 0) circle (2pt); 
    \filldraw[fill=white] (2, 0) circle (2pt); 
    \filldraw[fill=white] (3, 0) circle (2pt) node[right, xshift = 0.1cm] {$n=3$}; 
  \end{tikzpicture}

  \begin{tikzpicture}
    \draw[thick] (0,0) -- (1, 0) node[above, xshift = -0.5cm]  {6};
    \draw[thick] (1,0) -- (2, 0);
    \draw[thick] (2,0) -- (3, 0);
    \draw[thick, double distance = 2pt] (3, 0) -- (4, 0);
    \draw[ultra thick] (4,0) -- (5, 0);    
    \filldraw[fill=white] (0, 0) circle (2pt); 
    \filldraw[fill=white] (1, 0) circle (2pt); 
    \filldraw[fill=white] (2, 0) circle (2pt); 
    \filldraw (3, 0) circle (2pt); 
    \filldraw[fill=white] (4, 0) circle (2pt); 
    \filldraw[fill=white] (5, 0) circle (2pt) node[right, xshift = 0.1cm] {$n=4$}; 
  \end{tikzpicture}

  \begin{tikzpicture}
    \draw[thick] (0,0) -- (1, 0) node[above, xshift = -0.5cm]  {6};
    \draw[thick] (1,0) -- (2, 0);
    \draw[thick] (2,0) -- (3, 0);
    \draw[thick] (3,0) -- (4, 0);
    \draw[thick, double distance = 2pt] (4, 0) -- (5, 0);
    \draw[thick, double distance = 2pt] (5, 0) -- (6, 0);
    \filldraw[fill=white] (0, 0) circle (2pt); 
    \filldraw[fill=white] (1, 0) circle (2pt); 
    \filldraw[fill=white] (2, 0) circle (2pt); 
    \filldraw (3, 0) circle (2pt); 
    \filldraw[fill=white] (4, 0) circle (2pt); 
    \filldraw[fill=white] (5, 0) circle (2pt); 
    \filldraw[fill=white] (6, 0) circle (2pt) node[right, xshift = 0.1cm] {$n=5$}; 
  \end{tikzpicture}

  \begin{tikzpicture}
    \draw[thick] (0,0) -- (1, 0) node[above, xshift = -0.5cm]  {6};
    \draw[thick] (1,0) -- (2, 0);
    \draw[thick] (2,0) -- (3, 0);
    \draw[thick] (3,0) -- (4, 0);
    \draw[thick] (4, 0) -- (5, 0);
    \draw[thick, double distance = 2pt] (5, 0) -- (6, 0);
    \draw[thick] (5, 0) -- (5, -1);
    \filldraw[fill=white] (0, 0) circle (2pt); 
    \filldraw[fill=white] (1, 0) circle (2pt); 
    \filldraw[fill=white] (2, 0) circle (2pt); 
    \filldraw (3, 0) circle (2pt); 
    \filldraw[fill=white] (4, 0) circle (2pt); 
    \filldraw[fill=white] (5, 0) circle (2pt); 
    \filldraw[fill=white] (5, -1) circle (2pt); 
    \filldraw[fill=white] (6, 0) circle (2pt) node[right, xshift = 0.1cm, yshift = -0.5cm] {$n=6$}; 
  \end{tikzpicture}

  \begin{tikzpicture}
    \draw[thick] (0,0) -- (1, 0) node[above, xshift = -0.5cm]  {6};
    \draw[thick] (1,0) -- (2, 0);
    \draw[thick] (2,0) -- (3, 0);
    \draw[thick] (3,0) -- (4, 0);
    \draw[thick] (4, 0) -- (5, 0);
    \draw[thick] (5, 0) -- (6, 0);
    \draw[thick, double distance = 2pt] (6, 0) -- (7, 0);
    \draw[thick] (5, 0) -- (5, -1);
    \filldraw[fill=white] (0, 0) circle (2pt); 
    \filldraw[fill=white] (1, 0) circle (2pt); 
    \filldraw[fill=white] (2, 0) circle (2pt); 
    \filldraw (3, 0) circle (2pt); 
    \filldraw[fill=white] (4, 0) circle (2pt); 
    \filldraw[fill=white] (5, 0) circle (2pt); 
    \filldraw[fill=white] (5, -1) circle (2pt); 
    \filldraw[fill=white] (6, 0) circle (2pt); 
    \filldraw[fill=white] (7, 0) circle (2pt) node[right, xshift = 0.1cm, yshift = -0.5cm] {$n=7$}; 
  \end{tikzpicture}

  \begin{tikzpicture}
    \draw[thick] (0,0) -- (1, 0) node[above, xshift = -0.5cm]  {6};
    \draw[thick] (1,0) -- (2, 0);
    \draw[thick] (2,0) -- (3, 0);
    \draw[thick] (3,0) -- (4, 0);
    \draw[thick] (4, 0) -- (5, 0);
    \draw[thick] (5, 0) -- (6, 0);
    \draw[thick] (6, 0) -- (7, 0);
    \draw[thick, double distance = 2pt] (7, 0) -- (8, 0);
    \draw[thick] (5, 0) -- (5, -1);
    \filldraw[fill=white] (0, 0) circle (2pt); 
    \filldraw[fill=white] (1, 0) circle (2pt); 
    \filldraw[fill=white] (2, 0) circle (2pt); 
    \filldraw (3, 0) circle (2pt); 
    \filldraw[fill=white] (4, 0) circle (2pt); 
    \filldraw[fill=white] (5, 0) circle (2pt); 
    \filldraw[fill=white] (5, -1) circle (2pt); 
    \filldraw[fill=white] (6, 0) circle (2pt); 
    \filldraw[fill=white] (7, 0) circle (2pt); 
    \filldraw[fill=white] (8, 0) circle (2pt) node[right, xshift = 0.1cm, yshift = -0.5cm] {$n=8$}; 
  \end{tikzpicture}

\end{figure}

The diagrams in Figure~\ref{ge_nine} do include examples of broken-line branches, and Lann\'{e}r subgraphs. In each case these may be addressed using the sufficient condition in the second part of Corollary~\ref{vin_unwritten_prop}. However, the parabolic subgraphs still need to be considered, as for the previous diagrams, and they can be seen by inspection to be connected components of parabolic subgraphs of the appropriate rank.

Consider $n=9$. By deleting the two vertices which connect the broken-line branch to the rest of the diagram it can be seen that a copy of the elliptic graph $E_8$ remains. A broken-line branch has rank $2$, and $E_8$ has rank $8$, and therefore as $2+8 = 9 + 1 = n + 1$, the polyhedron has finite volume.

Now consider $n=10$. As the graph is symmetric only one of the copies of the Lann\'{e}r subgraph will be considered. Incidentally, this Lann\'{e}r graph has already appeared, as the simplex when $n=2$.  Again, by deleting vertices which connect the Lann\'{e}r subgraph to the rest of the diagram it can be seen that a copy of the elliptic graph $E_8$ remains. The Lann\'{e}r subgraph has rank $3$, and again $E_8$ has rank $8$, and therefore as $3+8 = 10 + 1 = n+1$, the polyhedron has finite volume.

The remaining graphs are dealt with in precisely the same way, and therefore the details will be omitted.

\begin{figure}[h!]
\centering
\caption{\label{ge_nine} Coxeter diagrams of the fundamental polyhedra of the discrete reflection group corresponding to the automorphism groups of the quadratic form $-3x_0^2 + x_1^2 + \ldots + x_n^2$ for $n = 9$ to $13$.}
  \begin{tikzpicture}
        \draw[thick, double distance = 2pt] (0, 0) -- (1, 0);
        \draw[thick, dashed] (1,0) -- (2, 0);
        \draw[ultra thick] (2,0) -- (3, 0);
        \draw[thick] (3,0) -- (4, 0) node[above, xshift = -0.5cm]  {6};
        \draw[thick] (0,0) -- (0, -1);
        \draw[thick] (4,0) -- (4, -1);
        \draw[thick] (0, -1) -- (0.8, -1);
        \draw[thick] (0.8, -1) -- (1.6, -1);
        \draw[thick] (1.6, -1) -- (2.4, -1);
        \draw[thick] (2.4, -1) -- (3.2, -1);
        \draw[thick] (3.2, -1) -- (4, -1);
        \draw[thick] (1.6, -1) -- (1.6, -0.2);
        \filldraw[fill=white] (0, 0) circle (2pt); 
        \filldraw[fill=white] (1, 0) circle (2pt); 
        \filldraw[fill=white] (2, 0) circle (2pt); 
        \filldraw[fill=white] (3, 0) circle (2pt); 
        \filldraw[fill=white] (4, 0) circle (2pt)node[right, xshift = 0.1cm, yshift = -0.5cm] {$n=9$}; 
        \filldraw[fill=white] (0, -1) circle (2pt); 
        \filldraw[fill=white] (0.8, -1) circle (2pt); 
        \filldraw[fill=white] (1.6, -1) circle (2pt); 
        \filldraw[fill=white] (2.4, -1) circle (2pt); 
        \filldraw[fill=white] (3.2, -1) circle (2pt); 
        \filldraw[fill=white] (4, -1) circle (2pt); 
        \filldraw[fill=white] (1.6, -0.2) circle (2pt); 
  \end{tikzpicture}

  \begin{tikzpicture}
        \draw[ultra thick] (0,0) -- (1, 0);
        \draw[thick, double distance = 2pt] (1,0) -- (1.8412535 , -0.5406408);
        \draw[thick, double distance = 2pt] (0,0) -- (-0.8412535 , -0.5406408);

        \draw[thick] (1.8412535 , -0.5406408) -- (1, -1.0812816)  node[above, xshift=0.32cm, yshift=0.2cm]  {6};
        \draw[thick] (-0.8412535 , -0.5406408) -- (0, -1.0812816) node[above, xshift=-0.32cm, yshift=0.2cm]  {6};;
        \draw[ultra thick] (0, -1.0812816) -- (1, -1.0812816);

        \draw[thick] (1.8412535 , -0.5406408) -- (2.2566685 , -1.4502728);
        \draw[thick] (-0.8412535 , -0.5406408) -- (-1.2566685 , -1.4502728);
        \draw[thick] (2.2566685 , -1.4502728) -- (2.1143537, -2.4400943);
        \draw[thick] (-1.2566685 , -1.4502728) -- (-1.1143537, -2.4400943);
        \draw[thick] (2.1143537, -2.4400943) -- (1.4594930, -3.1958438);
        \draw[thick] (-1.1143537, -2.4400943) -- (-0.4594930, -3.1958438);
        \draw[thick] (1.4594930, -3.1958438) -- (0.5000000, -3.4775764);
        \draw[thick] (-0.4594930, -3.1958438) -- (0.5000000, -3.4775764);
        \draw[thick] (0.5000000, -3.4775764) -- (0.5000000, -2.4775764);

        \filldraw[fill=white] (0, 0) circle (2pt); 
        \filldraw[fill=white] (1, 0) circle (2pt); 
        \filldraw[fill=white] (1.8412535 , -0.5406408) circle (2pt); 
        \filldraw[fill=white] (-0.8412535 , -0.5406408) circle (2pt); 
        \filldraw[fill=white] (2.2566685 , -1.4502728) circle (2pt); 
        \filldraw[fill=white] (-1.2566685 , -1.4502728) circle (2pt) node[left, xshift = -0.1cm, yshift = -0.5cm] {$n=10$}; 
        \filldraw[fill=white] (2.1143537, -2.4400943) circle (2pt); 
        \filldraw[fill=white] (-1.1143537, -2.4400943) circle (2pt); 
        \filldraw[fill=white] (1.4594930, -3.1958438) circle (2pt); 
        \filldraw[fill=white] (-0.4594930, -3.1958438) circle (2pt); 
        \filldraw[fill=white] (0.5000000, -3.4775764) circle (2pt); 
        \filldraw[fill=white] (0.5000000, -2.4775764) circle (2pt); 
        \filldraw[fill=white] (0, -1.0812816) circle (2pt); 
        \filldraw[fill=white] (1, -1.0812816) circle (2pt); 
  \end{tikzpicture}
  \begin{tikzpicture}
    \draw[thick, double distance = 2pt] (0,0) -- (0.9659258 , -0.2588190);
    \draw[thick, double distance = 2pt] (0,0) -- (-0.9659258 , -0.2588190);

    \draw[thick] (0.9659258 , -0.2588190) -- (1.6730326, -0.9659258);
    \draw[thick] (-0.9659258 , -0.2588190) -- (-1.6730326, -0.9659258);    

    \draw[thick] (1.6730326, -0.9659258) -- (0.6730326, -0.9659258)  node[below, xshift=0.5cm]  {6};
    \draw[thick] (-1.6730326, -0.9659258) -- (-0.6730326, -0.9659258)  node[below, xshift=-0.5cm]  {6};
    \draw[ultra thick] (-0.6730326, -0.9659258) -- (0.6730326, -0.9659258);

    \draw[thick] (1.6730326, -0.9659258) -- (1.9318517, -1.9318517);
    \draw[thick] (-1.6730326, -0.9659258) -- (-1.9318517, -1.9318517);    
    \draw[thick] (1.9318517, -1.9318517) -- (1.6730326, -2.8977775);
    \draw[thick] (-1.9318517, -1.9318517) -- (-1.6730326, -2.8977775);    
    \draw[thick] (1.6730326, -2.8977775) -- (0.9659258, -3.6048843);
    \draw[thick] (-1.6730326, -2.8977775) -- (-0.9659258, -3.6048843);    
    \draw[thick] (0.9659258, -3.6048843) -- (0, -3.8637033);
    \draw[thick] (-0.9659258, -3.6048843) -- (0, -3.8637033);    
    \draw[thick] (0, -3.8637033) -- (0, -2.8637033);    

    \filldraw[fill=white] (0 , 0) circle (2pt); 
    \filldraw[fill=white] (0.9659258 , -0.2588190) circle (2pt); 
    \filldraw[fill=white] (-0.9659258 , -0.2588190) circle (2pt); 
    \filldraw[fill=white] (1.6730326, -0.9659258) circle (2pt); 
    \filldraw[fill=white] (-1.6730326, -0.9659258) circle (2pt); 
    \filldraw[fill=white] (1.9318517, -1.9318517) circle (2pt) node[right, xshift = 0.1cm] {$n=11$}; 
    \filldraw[fill=white] (-1.9318517, -1.9318517) circle (2pt); 
    \filldraw[fill=white] (1.6730326, -2.8977775) circle (2pt); 
    \filldraw[fill=white] (-1.6730326, -2.8977775) circle (2pt); 
    \filldraw[fill=white] (0.9659258, -3.6048843) circle (2pt); 
    \filldraw[fill=white] (-0.9659258, -3.6048843) circle (2pt); 
    \filldraw[fill=white] (0, -3.8637033) circle (2pt); 
    \filldraw[fill=white] (0, -2.8637033) circle (2pt); 
    \filldraw[fill=white] (0.6730326, -0.9659258) circle (2pt); 
    \filldraw[fill=white] (-0.6730326, -0.9659258) circle (2pt); 
  \end{tikzpicture}

  \begin{tikzpicture}
    \draw[thick] (0,0) -- (0.9659258 , -0.2588190);
    \draw[thick] (0,0) -- (-0.9659258 , -0.2588190);
    \draw[thick] (0,0) -- (0, -1);

    \draw[thick] (0.9659258 , -0.2588190) -- (1.6730326, -0.9659258);
    \draw[thick] (-0.9659258 , -0.2588190) -- (-1.6730326, -0.9659258);    

    \draw[thick] (1.6730326, -0.9659258) -- (0.807007196, -1.4659258)  node[below, xshift = 0.5cm, yshift = 0.35cm]  {6};
    \draw[thick] (-1.6730326, -0.9659258) -- (-0.807007196, -1.4659258)  node[below, xshift = -0.5cm, yshift = 0.35cm]  {6};
    \draw[ultra thick] (0.807007196, -1.4659258) -- (-0.807007196, -1.4659258);

    \draw[dashed] (1.9318517, -1.9318517) -- (0.807007196, -2.3977775);
    \draw[dashed] (-1.9318517, -1.9318517) -- (-0.807007196, -2.3977775);
    \draw[dashed] (0.807007196, -2.3977775) -- (-0.807007196, -2.3977775);
    \draw[dashed] (0.807007196, -2.3977775) -- (-0.807007196, -1.4659258);
    \draw[dashed] (-0.807007196, -2.3977775) -- (0.807007196, -1.4659258);

    \draw[ultra thick] (0.807007196, -2.3977775) -- (0, -1);
    \draw[ultra thick] (-0.807007196, -2.3977775) -- (0, -1);

    \draw[thick] (1.6730326, -0.9659258) -- (1.9318517, -1.9318517);
    \draw[thick] (-1.6730326, -0.9659258) -- (-1.9318517, -1.9318517);    
    \draw[thick] (1.9318517, -1.9318517) -- (1.6730326, -2.8977775);
    \draw[thick] (-1.9318517, -1.9318517) -- (-1.6730326, -2.8977775);    
    \draw[thick] (1.6730326, -2.8977775) -- (0.9659258, -3.6048843);
    \draw[thick] (-1.6730326, -2.8977775) -- (-0.9659258, -3.6048843);    
    \draw[thick] (0.9659258, -3.6048843) -- (0, -3.8637033);
    \draw[thick] (-0.9659258, -3.6048843) -- (0, -3.8637033);    
    \draw[thick] (0, -3.8637033) -- (0, -2.8637033);    

    \filldraw[fill=white] (0 , 0) circle (2pt); 
    \filldraw[fill=white] (0 , -1) circle (2pt); 
    \filldraw[fill=white] (0.9659258 , -0.2588190) circle (2pt); 
    \filldraw[fill=white] (-0.9659258 , -0.2588190) circle (2pt); 
    \filldraw[fill=white] (1.6730326, -0.9659258) circle (2pt); 
    \filldraw[fill=white] (-1.6730326, -0.9659258) circle (2pt); 
    \filldraw[fill=white] (1.9318517, -1.9318517) circle (2pt); 
    \filldraw[fill=white] (-1.9318517, -1.9318517) circle (2pt) node[left, xshift = -0.1cm] {$n=12$};  
    \filldraw[fill=white] (1.6730326, -2.8977775) circle (2pt); 
    \filldraw[fill=white] (-1.6730326, -2.8977775) circle (2pt); 
    \filldraw[fill=white] (0.9659258, -3.6048843) circle (2pt); 
    \filldraw[fill=white] (-0.9659258, -3.6048843) circle (2pt); 
    \filldraw[fill=white] (0, -3.8637033) circle (2pt); 
    \filldraw[fill=white] (0, -2.8637033) circle (2pt); 

    \filldraw[fill=white] (-0.807007196, -2.3977775) circle (2pt); 
    \filldraw[fill=white] (0.807007196, -2.3977775) circle (2pt); 
    \filldraw[fill=white] (0.807007196, -1.4659258) circle (2pt); 
    \filldraw[fill=white] (-0.807007196, -1.4659258) circle (2pt); 
  \end{tikzpicture}
  \begin{tikzpicture}
    \draw[dashed] (1.060660172,  -2.992511872) -- (0, -0.43185165);
    \draw[dashed] (1.060660172,  -2.992511872) -- (-1.5, -1.9318517);
    \draw[dashed] (1.060660172,  -0.871191528) -- (-1.5, -1.9318517);
    \draw[dashed] (1.5, -1.9318517) -- (-1.5, -1.9318517);
    \draw[dashed] (1.5, -1.9318517) -- (-1.060660172,  -0.871191528);
    \draw[dashed] (1.060660172,  -2.992511872) -- (-1.060660172,  -0.871191528);
    \draw[dashed] (-1.060660172,  -2.992511872) -- (1.5, -1.9318517);
    \draw[dashed] (-1.060660172,  -2.992511872) -- (1.060660172,  -0.871191528);
    \draw[dashed] (-1.060660172,  -2.992511872) -- (0, -0.43185165);
    \draw[dashed] (0, -3.43185165) -- (1.060660172,  -0.871191528);
    \draw[dashed] (0, -3.43185165) -- (-1.060660172,  -0.871191528);

    \draw[thick] (0,0.7071068) -- (1.31950985, 0.353545201);
    \draw[thick] (0,0.7071068) -- (-1.31950985, 0.353545201);
    \draw[thick] (0,0.7071068) -- (0,0.137627575);

    \draw[thick] (1.31950985, 0.353545201) -- (2.285458101, -0.61234185);
    \draw[thick] (-1.31950985, 0.353545201) -- (-2.285458101, -0.61234185);    

    \draw[thick] (2.285458101, -0.61234185) -- (1.060660172,  -0.871191528)  node[below, xshift = 0.65cm, yshift = 0.2cm]  {6};
    \draw[thick] (-2.285458101, -0.61234185) -- (-1.060660172,  -0.871191528)  node[below, xshift = -0.65cm, yshift = 0.2cm]  {6};
    \draw[thick] (2.285458101, -3.25136155) -- (1.060660172,  -2.992511872)  node[below, xshift = 0.6cm, yshift = -0.053cm]  {6};
    \draw[thick] (-2.285458101, -3.25136155) -- (-1.060660172,  -2.992511872)  node[below, xshift = -0.6cm, yshift = -0.053cm]  {6};

    \draw[ultra thick] (1.060660172,  -0.871191528) -- (-1.060660172,  -0.871191528);
    \draw[ultra thick] (-1.5,  -1.9318517) -- (-2.6390197, -1.9318517);
    \draw[ultra thick] (2.6390197, -1.9318517) -- (1.5,  -1.9318517);
    \draw[ultra thick]  (-1.060660172,  -0.871191528) -- (-1.060660172,  -2.992511872);
    \draw[ultra thick] (1.060660172,  -2.992511872) -- (1.060660172,  -0.871191528);
    \draw[ultra thick] (1.060660172,  -2.992511872) -- (-1.060660172,  -2.992511872);
    \draw[ultra thick] (0, -0.43185165) -- (0, -3.43185165);

    \draw[thick] (2.285458101, -0.61234185) -- (2.6390197, -1.9318517);
    \draw[thick] (-2.285458101, -0.61234185) -- (-2.6390197, -1.9318517);    
    \draw[thick] (2.6390197, -1.9318517) -- (2.285458101, -3.25136155);
    \draw[thick] (-2.6390197, -1.9318517) -- (-2.285458101, -3.25136155);
    \draw[thick] (2.285458101, -3.25136155) -- (1.31950985, -4.217248501);
    \draw[thick] (-2.285458101, -3.25136155) -- (-1.31950985, -4.217248501);    
    \draw[thick] (1.31950985, -4.217248501) -- (0, -4.5708101);
    \draw[thick] (-1.31950985, -4.217248501) -- (0, -4.5708101);    
    \draw[thick] (0, -4.5708101) -- (0, -3.647777475);    

    \draw[thick, double distance = 2pt] (0,0.137627575) -- (0, -0.43185165);
    \draw[thick, double distance = 2pt] (0, -4) -- (0, -3.43185165);    
    \draw[thick, double distance = 2pt] (1.5, -1.9318517) -- (0, -0.43185165);
    \draw[thick, double distance = 2pt] (-1.5, -1.9318517) -- (0, -0.43185165);
    \draw[thick, double distance = 2pt] (1.5, -1.9318517) -- (0, -3.43185165);
    \draw[thick, double distance = 2pt] (-1.5, -1.9318517) -- (0, -3.43185165);

    \filldraw[fill=white] (0 , 0.7071068) circle (2pt); 
    \filldraw[fill=white] (1.31950985, 0.353545201) circle (2pt); 
    \filldraw[fill=white] (-1.31950985, 0.353545201) circle (2pt); 
    \filldraw[fill=white] (2.285458101, -0.61234185) circle (2pt); 
    \filldraw[fill=white] (-2.285458101, -0.61234185) circle (2pt); 
    \filldraw[fill=white] (2.6390197, -1.9318517) circle (2pt) node[right, xshift = 0.1cm] {$n=13$}; 
    \filldraw[fill=white] (-2.6390197, -1.9318517) circle (2pt); 
    \filldraw[fill=white] (2.285458101, -3.25136155) circle (2pt); 
    \filldraw[fill=white] (-2.285458101, -3.25136155) circle (2pt); 
    \filldraw[fill=white] (1.31950985, -4.217248501) circle (2pt); 
    \filldraw[fill=white] (-1.31950985, -4.217248501) circle (2pt); 
    \filldraw[fill=white] (0, -4) circle (2pt); 

    \filldraw[fill=white] (0,0.137627575) circle (2pt); 
    \filldraw[fill=white] (0, -4.5708101) circle (2pt); 
    \filldraw[fill=white] (1.5,  -1.9318517) circle (2pt); 
    \filldraw[fill=white] (-1.5,  -1.9318517) circle (2pt); 
    \filldraw[fill=white] (1.060660172,  -0.871191528) circle (2pt); 
    \filldraw[fill=white] (-1.060660172,  -0.871191528) circle (2pt); 
    \filldraw[fill=white] (-1.060660172,  -2.992511872) circle (2pt); 
    \filldraw[fill=white] (1.060660172,  -2.992511872) circle (2pt); 
    \filldraw[fill=white] (0, -3.43185165) circle (2pt); 
    \filldraw[fill=white] (0, -0.43185165) circle (2pt); 

  \end{tikzpicture}

\end{figure}
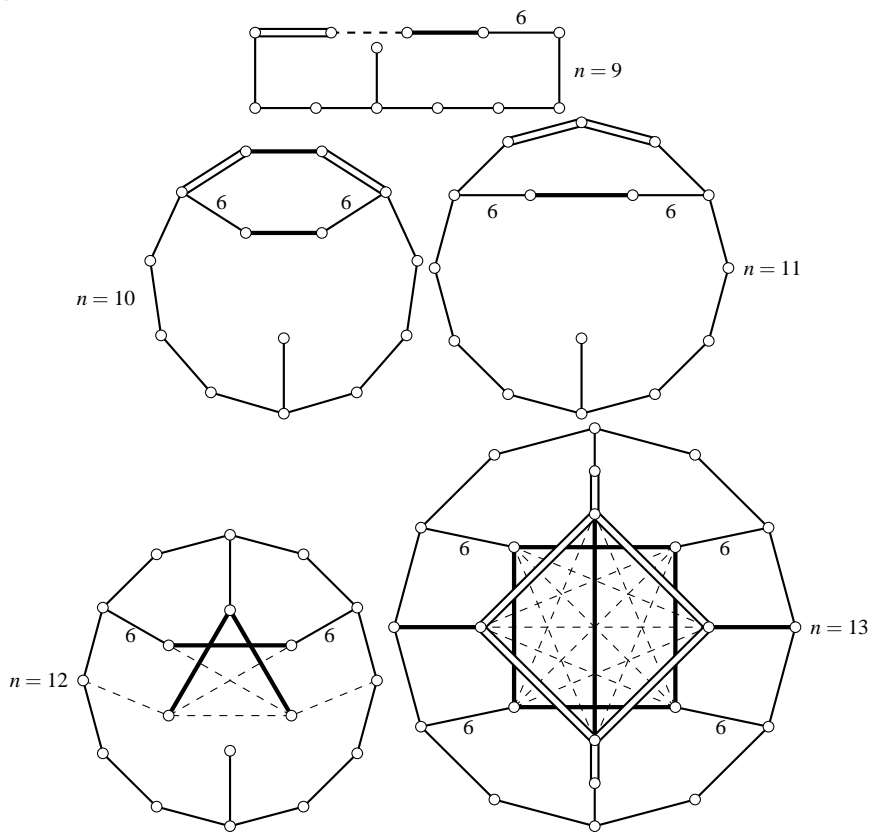

Based on the work of Chein \cite{chein69}, and Tumarkin \cite{tumarkin04}, it is possible to obtain a geometric description of some of these Coxeter polyhedra. 
\begin{corollary}
For $n=2, 3$, the geometric description of the polyhedra in Figure~\ref{one_to_eight} is a simplex. In two dimensions is it compact, and in three dimensions it is non-compact.

For $n=4, \ldots, 8$, the geometric description of the polyhedra in Figure~\ref{one_to_eight} is a pyramid over a product of two simplicies. These are non-compact polyhedra, and each have a single ideal vertex. In each of these cases, the hyperplane corresponding to the base of the pyramid is identified by a black vertex. This illustrates a result of Vinberg \cite{vinberg85} which states that parabolic subgraphs of rank $n-1$ correspond to ideal vertexes.
\end{corollary}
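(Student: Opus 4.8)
\section*{Proof proposal}

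The plan is to read the combinatorial type of each polyhedron directly off its Coxeter diagram, using the standard dictionary between subdiagrams and vertices of the polyhedron: by the finite-volume criterion recalled in Corollary~\ref{vin_unwritten_prop} together with the result of Vinberg~\cite{vinberg85} quoted in the statement, an elliptic subdiagram of rank $n$ marks an ordinary (finite) vertex, while a parabolic subdiagram of rank $n-1$ marks an ideal vertex. The simplest invariant to extract first is the number of facets, which is just the number of nodes of the diagram in Figure~\ref{one_to_eight}: for $n=2,3$ this number is $n+1$, and for $n=4,\ldots,8$ it is $n+2$.

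For $n=2,3$ the polyhedron is therefore bounded by $n+1$ hyperplanes, and a finite-volume acute-angled polyhedron in $\mathbb{H}^n$ with only $n+1$ facets is a simplex (these diagrams occur in Chein's list~\cite{chein69}). Compactness is then decided by searching for parabolic subdiagrams: for $n=2$ the diagram is Lann\'{e}r, so every vertex is finite and the triangle is compact, whereas for $n=3$ the diagram contains the parabolic subdiagram $\tilde{G}_2$ of rank $2=n-1$, which by Vinberg's correspondence produces an ideal vertex, so the simplex is non-compact. This matches the compactness already noted before the corollary.

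For $n=4,\ldots,8$ the polyhedron $P$ has $n+2$ facets. Deleting the black node leaves a parabolic subdiagram on $n+1$ nodes of rank $n-1$; by Vinberg's correspondence this marks an ideal vertex $v_\infty$ lying on exactly those $n+1$ facets, i.e.\ on all facets except the black one. A parabolic diagram with $n+1$ nodes and rank $n-1$ has exactly $(n+1)-(n-1)=2$ irreducible affine components, say $\tilde{X}_p \oplus \tilde{X}_q$ with $p+q=n-1$, so the Euclidean reflection group at $v_\infty$ is a direct product and the link of $v_\infty$ is the product of the two simplicial fundamental domains, namely $\Delta^p \times \Delta^q$. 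Since the black facet meets every other facet but avoids $v_\infty$, it plays the role of the base, and $P$ is the pyramid with apex $v_\infty$ over $\Delta^p \times \Delta^q$; this is one of the finite-volume Coxeter polytopes with $n+2$ facets classified by Tumarkin~\cite{tumarkin04}, which identifies it unambiguously.

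The main obstacle is confirming that the combinatorial type is genuinely a pyramid over a product of two simplices and nothing else with $n+2$ facets, and in particular that there is a single ideal vertex. Concretely I would verify, by inspecting each of the five diagrams, that (i) the only rank-$(n-1)$ parabolic subdiagram is the one obtained by deleting the black node, so $v_\infty$ is the unique ideal vertex, and (ii) every other maximal elliptic subdiagram has rank exactly $n$, so all remaining vertices are finite and simple. With the facet count $n+2$, the unique non-simple vertex $v_\infty$, and its prism-like link $\Delta^p \times \Delta^q$ pinned down, matching against the classifications of Chein and Tumarkin completes the identification.
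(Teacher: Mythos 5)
Your proposal is correct and follows essentially the same route as the paper, which justifies the corollary by inspecting the diagrams in Figure~\ref{one_to_eight} and appealing to Vinberg's correspondence between rank-$(n-1)$ parabolic subdiagrams and ideal vertices together with the classifications of Chein and Tumarkin. The paper leaves the diagram-by-diagram verification implicit, whereas you spell out the facet count, the identification of the apex as the unique ideal vertex obtained by deleting the black node, and the two-component structure of the resulting parabolic subdiagram giving the link $\Delta^p \times \Delta^q$; this is a faithful and slightly more detailed version of the intended argument.
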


 In dimensions $9$-$13$, it is not possible to obtain a similar geometric description of the polyhedron for two reasons. The first is that the polyhedra are non-compact and have at least $n+4$ sides, a class which has not yet been explored in the literature (although the compact case has recieved some attention in \cite{tumarkinfelikson08}), and the second is that the Coxeter schemes contain a closed loop. 

It is not clear what geometric information about the polyhedron is encoded by a closed loop in the Coxeter scheme, beyond contributing to parabolic subgraphs.

Geometric information which can be recovered from the Coxeter scheme is an enumeration of the ideal vertices of the polyhedron. By \cite{vinberg85}, an ideal vertex is a parabolic subgraph of rank $n-1$. Ideal vertices correspond to the cusps of the quotient spaces. 

We can also describe the symmetry groups of the Coxeter polyhedra. By (\ref{sdproduct}), the group $\Theta_n$ is decomposed into a semi-direct product $\Gamma \rtimes H$. The symmetry group $Sym \; P$, of which $H$ is a subgroup, is naturally isomorphic to the symmetry group of the Coxeter scheme of $P$. In our case we always have, $H = Sym \; P$. This can be seen by inspection, in that any element $\eta \in Sym \; P$ swaps pairs of vectors $(e_i, e_j)$, and it can be seen that 
\[(\eta(e_i), \eta(e_j)) = (e_i, e_j)\]
so $Sym \; P$ preserves the lattice.

Therefore, by analysing the diagrams in Figure~\ref{ge_nine}, we can obtain the following corollary.

\begin{corollary}
For $n \le 9$, $Sym \; P$ is trivial, while for $10 \le n \le 12$, $Sym \; P$ is isomorphic to $\mathbb{Z}_2$.

For $n=9$ the polyhedron has two ideal vertices which are not symmetric to one another.

For $n=10$ the polyhedron has three ideal vertices, two of which are symmetrically placed.

For $n=11$ the polyhedron has five ideal vertices. These can be grouped into two pairs of symmetric vertices, and a single distinct vertex.

For $n=12$ the polyhedron has six ideal vertices. These can be grouped into two pairs of symmetric vertices, and two distinct vertices.

For $n=13$ the polyhedron has thirteen ideal vertices. These can be grouped into six pairs of symmetric vertices, and a single distinct vertex. The symmetry group of the Coxeter scheme in 13 dimensions is generated by two reflections which commute, and so is isomorphic to $\mathbb{Z}_2 \times \mathbb{Z}_2$.
\end{corollary}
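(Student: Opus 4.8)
The plan is to read both assertions directly off the diagrams in Figures~\ref{one_to_eight} and \ref{ge_nine}, using the two facts established immediately above the corollary: that $Sym\;P$ is naturally isomorphic to the automorphism group of the labelled Coxeter scheme of $P$, and that (by Vinberg \cite{vinberg85}) the ideal vertices of $P$ are exactly the parabolic subgraphs of the scheme of rank $n-1$.

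First I would determine $Sym\;P$ for each $n$ by computing the automorphism group of the labelled graph. For $2\le n\le 8$ the schemes of Figure~\ref{one_to_eight} are paths with at most one short branch, and the distinguished ``$6$'' edge at one end together with the double/cusp edge at the other prevents any nontrivial label-preserving automorphism; the same holds for the $n=9$ scheme of Figure~\ref{ge_nine}, whose two ``rails'' carry different decorations. Hence $Sym\;P$ is trivial for $n\le 9$. For $n=10,11,12$ the schemes are drawn with a single left--right mirror, and one checks that this reflection preserves every edge multiplicity and every ``$6$'' label while no other permutation does, so $Sym\;P\cong\mathbb{Z}_2$. For $n=13$ the scheme admits two commuting mirror symmetries (one horizontal, one vertical), each a graph automorphism; they generate $\mathbb{Z}_2\times\mathbb{Z}_2$, and no automorphism interchanges the two axes because the fixed-vertex sets of the two reflections differ.

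Second, for the ideal vertices, I would enumerate for each $n$ the parabolic subgraphs of rank $n-1$. A parabolic subgraph is a disjoint union of connected affine Coxeter diagrams, and its rank equals its number of vertices minus its number of connected components; so I would scan the scheme for every such configuration (of types $\tilde{A}_k$, $\tilde{B}_k$, $\tilde{D}_k$, $\tilde{E}_k$, $\tilde{G}_2$, and their disjoint unions) and retain those whose rank is $n-1$. This produces the counts $2,3,5,6,13$ for $n=9,\dots,13$. I would then let the symmetry group found above act on this finite set of subgraphs: the orbits of size two are the ``symmetric pairs'' and the subgraphs fixed setwise by the whole group are the ``distinct'' vertices. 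Verifying that for $n=13$ each rank-$12$ parabolic subgraph is preserved by at least one of the two generating reflections (so that the $\mathbb{Z}_2\times\mathbb{Z}_2$-action has only orbits of sizes one and two) yields the grouping into six pairs and one fixed vertex, while the analogous orbit count for each $\mathbb{Z}_2$ in dimensions $10$--$12$ and the trivial action for $n=9$ give the remaining assertions.

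The main obstacle is the exhaustive enumeration of the rank-$(n-1)$ parabolic subgraphs for $n=12$ and $n=13$, where the schemes contain closed loops and many dashed edges: one must be careful to locate every affine subdiagram, to count unions of disjoint affine pieces correctly, and to treat the loops (which enter parabolic subgraphs without being ordinary affine paths) without over- or under-counting. By contrast, once the diagrams are drawn accurately the determination of $Sym\;P$ and the subsequent orbit bookkeeping are routine.
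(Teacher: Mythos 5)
Your proposal is correct and follows essentially the same route as the paper, which derives the corollary purely ``by analysing the diagrams'' using exactly the two facts you invoke: that $Sym\;P$ is naturally isomorphic to the symmetry group of the Coxeter scheme, and that (by Vinberg) ideal vertices correspond to parabolic subgraphs of rank $n-1$. Your additional bookkeeping (rank equals vertices minus components, orbit analysis under the symmetry group) merely makes explicit the inspection the paper leaves implicit.
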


\section{No examples in higher dimensions}\label{noexamplesinhigherdimensions}
The reflection groups presented so far are the only examples associated to this quadratic form. In this section, we prove that there are no higher dimensional examples, by showing that there is always a parabolic subgraph of insufficent rank, and it is impossible to produce a hyperplane which satisfies the crystallographic condition and completes the graph.

We now prove the second part of Theorem~\ref{theorem1}:
\begin{proposition}\label{noreflgroupsge14}
There are no discrete reflection groups associated to the quadratic form $-3x_0^2 + x_1^2 + \ldots + x_n^2$ in $n$-dimensions with $n \ge 14$ with finite covolume.
\end{proposition}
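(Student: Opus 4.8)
The plan is to run Vinberg's algorithm one step beyond where the finite-volume examples stop and show that no admissible vector can ever repair a parabolic subgraph of deficient rank. Concretely, the approach is to exhibit, for each $n \ge 14$, a parabolic subgraph $G_S$ arising from the initial vectors (the ordering vectors $e_i$ together with the early algorithm vectors such as $v_0+3v_1$ and $v_0+v_1+\cdots+v_5$) whose rank is strictly less than $n-1$, and then argue that Corollary~\ref{vin_unwritten_prop}(1) can never be satisfied because no further admissible vector can enlarge $G_S$ to a parabolic subgraph of rank $n-1$. The key point is that an admissible vector $e=\sum k_i v_i$ must simultaneously satisfy the ordering constraints (\ref{k_i_ordering}), (\ref{positive_k_0}), the length restriction (\ref{epsilon}), the crystallographic divisibility (all $k_i$ divisible by $3$ when $(e,e)=6$), and the negative-inner-product conditions with every previously found vector, including (\ref{k_0_ordering}) and (\ref{3k_0gesum}). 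The heart of the matter is that these constraints force the coordinate pattern of any new vector to be very rigid, and for large $n$ they become jointly unsatisfiable in a way that leaves the offending parabolic component permanently disconnected.

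**Main steps.**

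First I would fix the candidate parabolic subgraph. The natural choice is the long $\tilde{A}$ or $\tilde{D}$-type chain coming from the vectors $e_1,\ldots,e_n$ of the polyhedral angle together with $v_0+3v_1$ and $v_0+v_1+\cdots+v_5$; one computes its rank and checks that for $n\ge 14$ it fails to reach $n-1$ as a connected parabolic component. Second, I would set up the Diophantine bookkeeping: for each admissible length $(e,e)\in\{1,2,3,6\}$ the defining equation is $(e,e)+3k_0^2=\sum_{i\ge1}k_i^2$, and the ordering plus the inner-product constraints with the already-constructed vectors bound the $k_i$ coordinatewise (as in the $=14$ computation in the proof of Proposition~\ref{vector_prop}). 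Third, I would show that any vector $e$ whose hyperplane could attach to the deficient parabolic component is excluded: either it violates the crystallographic condition (the $(e,e)=3$ and $(e,e)=6$ cases typically die by the $9\nmid$ argument already used, where $k_i=3k_i'$ forces the quadratic residue to be inadmissible modulo $9$), or it has positive inner product with one of the earlier vectors and is therefore not mutually admissible, or the equation $(e,e)+3k_0^2=\sum k_i^2$ simply has no solution respecting the ordering in that number of variables. Finally, I would conclude via the necessary part of Vinberg's criterion (Proposition~\ref{vin_prop_1}(1)): since no admissible hyperplane completes $G_S$ to a rank-$(n-1)$ parabolic subgraph, the polyhedron produced by the algorithm cannot have finite volume, and hence the lattice is non-reflective.

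**Where the difficulty lies.**

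I expect the main obstacle to be the third step: ruling out \emph{all} admissible completing vectors uniformly in $n$, rather than for a handful of small cases. The danger is that as $n$ grows the Diophantine equation $(e,e)+3k_0^2=\sum k_i^2$ admits more partitions, so a naive case analysis balloons. The clean way around this is to extract a single growing obstruction — a fixed local configuration (a parabolic piece together with the constraints (\ref{k_0_ordering}) and (\ref{3k_0gesum})) whose rank deficit is independent of $n$ and which no bounded-coordinate admissible vector can cure — so that one uniform argument covers the entire range $n\ge14$. Making this uniformity rigorous, i.e.\ showing that the list in Table~\ref{results_table} genuinely terminates and that the terminal parabolic subgraph always has rank $n-2$ or less while remaining unextendable, is the crux; the individual arithmetic exclusions (crystallographic divisibility, sign of inner products, non-solvability of the sum-of-squares equation) are routine once the right fixed subgraph has been isolated.
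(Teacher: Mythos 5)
Your high-level strategy is the same as the paper's: run Vinberg's algorithm a few steps past dimension~13, isolate a parabolic subgraph of deficient rank, and show that no admissible vector can ever complete it to rank $n-1$, so that the polyhedron never attains finite volume. However, there is a genuine gap at exactly the point you yourself flag as the crux, and your choice of the obstructing subgraph is not the right one. The paper does not work with the ``long $\tilde{A}$- or $\tilde{D}$-type chain'' built from $e_1,\dots,e_n$ together with $v_0+3v_1$ and $v_0+v_1+\cdots+v_5$; it first computes that for $n\ge 14$ the algorithm also produces $2(v_0+v_1)+v_2+\cdots+v_{11}$ and, crucially, $2v_0+v_1+\cdots+v_{14}$ (a vector that only becomes admissible at $n=14$), and then takes as $\Gamma_p$ a pair of copies of $\tilde{E_6}$ of total rank~12 spanning the coordinates $v_0,\dots,v_{14}$. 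The reason this particular choice matters is that orthogonality to $\Gamma_p$ pins down the first fifteen coordinates of any completing vector up to two integer parameters: one gets $e=(2q+p)v_0+3q(v_1+v_2)+p(v_3+\cdots+v_8)+q(v_9+\cdots+v_{14})+\sum_{i\ge 15}k_iv_i$, whence $|e|^2=3(p-2q)^2+\sum_{i\ge 15}k_i^2$. The whole proof then reduces to one closed-form Diophantine fact: this quantity cannot be $3$ or $6$ by the divisibility clause of the crystallographic condition, cannot be $1$ or $2$ when $n=14$ since $p\ge q>0$, and for $n\ge 15$ any vector achieving length $1$ or $2$ necessarily creates a new parabolic component ($\tilde{A_1}$, $\tilde{C_2}$, or $\tilde{B}_{n-14}$) that still leaves the total rank at most $n-2$, after which the added orthogonality forces $k_i=0$ for $i\ge 15$ and $3(p-2q)^2\in\{1,2\}$, which is impossible.

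Your proposal correctly anticipates that ``a fixed local configuration whose rank deficit is independent of $n$'' is needed, but you do not produce it, and the subgraph you do name would not yield the rigid coordinate pattern on which the uniform argument rests: without forcing orthogonality across all of $v_0,\dots,v_{14}$, the sum $\sum k_i^2$ retains too many free coordinates and the ``naive case analysis balloons'' exactly as you fear. The individual exclusion mechanisms you list (the $9\nmid$ argument for lengths $3$ and $6$, sign conditions, non-solvability of the sum-of-squares equation) are indeed the ones used, but they only close up into a proof once the two-$\tilde{E_6}$ configuration has been found and the two further algorithm vectors computed. As written, the proposal is a correct plan with its central step --- the identification of $\Gamma_p$ and the resulting two-parameter length formula --- left unexecuted.
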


\begin{lemma}
For  $n \ge 14$, the first four  vectors generated by the algorithm are presented in Table~\ref{14d_vectors}.
\begin{table}[h!]
\centering
\caption{\label{14d_vectors}The first four vectors produced by Vinberg's algorithm in $14$ dimensions}
  \begin{tabular}{c|c|c|c|c}
    $i$ & $e_i$ & $(e,e)$ & $\frac{k_0^2}{(e,e)}$\\
    \hline
    $n+1$ & $v_0 + 3 v_1$ & $6$ & $0.167$\\
    $n+2$ & $v_0 + v_1 + v_2 + v_3 + v_4 + v_5$ & $2$ & $0.5$\\
    $n+3$ & $2(v_0 + v_1) + v_2 + \ldots + v_{11}$ & $2$ & $2$\\
    $n+4$ & $2v_0 + v_1 + \ldots + v_{14}$ & $2$ & $2$
  \end{tabular}
\end{table}
\end{lemma}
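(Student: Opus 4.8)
\noindent The plan is to run Vinberg's algorithm precisely as in the proof of Proposition~\ref{vector_prop}, but to record only the first four vectors and to keep careful track of the restrictions that remain active once $n \ge 14$. A candidate normal vector $e = \sum_{i=0}^n k_i v_i$ must satisfy the ordering condition (\ref{k_i_ordering}), the positivity (\ref{positive_k_0}), and the length condition (\ref{epsilon}), in which a length $(e,e) \in \{3,6\}$ forces every $k_i$ with $i>0$ to be divisible by $3$; it must also obey the defining relation $\sum_{i=1}^n k_i^2 = (e,e) + 3k_0^2$. At each stage the algorithm picks the admissible vector minimising (\ref{minimise}), that is $k_0^2/(e,e)$, among those having non-positive inner product with all previously selected vectors.

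First I would list the attainable values of $k_0^2/(e,e)$ in increasing order up to $2$, namely $\tfrac16,\tfrac13,\tfrac12,\tfrac23,1,\tfrac43,\tfrac32,2$, and dispatch them one at a time. The value $\tfrac16$ ($k_0=1$, $(e,e)=6$) forces $\sum k_i^2 = 9$ with all $k_i$ divisible by $3$, whose unique ordered solution is $v_0+3v_1$; this is $e_{n+1}$, and it introduces the extra constraint (\ref{k_0_ordering}). The values $\tfrac13$ and $\tfrac23$ are excluded by the divisibility requirement together with (\ref{k_0_ordering}), exactly as in Proposition~\ref{vector_prop}. The value $\tfrac12$ ($k_0=1$, $(e,e)=2$) requires $\sum k_i^2 = 5$ with every $k_i \le 1$, producing $v_0+v_1+\dots+v_5$; this is $e_{n+2}$, and it adds the constraint (\ref{3k_0gesum}).

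The decisive step is the passage through the remaining values. At value $1$ ($k_0=1$, $(e,e)=1$) the only solution of $\sum k_i^2 = 4$ with $k_i \le 1$ is $v_0+v_1+v_2+v_3+v_4$, but its inner product with $e_{n+2}$ equals $-3+4 = 1 > 0$, so it is inadmissible and is discarded. The values $\tfrac43$ and $\tfrac32$ again admit no solution because of the divisibility condition. Finally, value $2$ ($k_0=2$, $(e,e)=2$) requires $\sum k_i^2 = 14$ with $k_i \le 2$; imposing (\ref{k_0_ordering}) and (\ref{3k_0gesum}) leaves exactly the two vectors $2(v_0+v_1)+v_2+\dots+v_{11}$ and $2v_0+v_1+\dots+v_{14}$. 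The first needs eleven and the second fourteen nonzero coordinates, so the second becomes available precisely for $n \ge 14$; a short computation gives their mutual inner product as $-12+2+10 = 0$, so they are simultaneously admissible and furnish $e_{n+3}$ and $e_{n+4}$.

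The hard part will be the exhaustiveness bookkeeping rather than any single computation: one must be sure that no admissible vector has been skipped between the values already consumed, i.e. that every candidate of value strictly less than $2$ is either non-existent (killed by the crystallographic and divisibility conditions) or inadmissible against an earlier vector, and that at value $2$ the constraint (\ref{3k_0gesum}) eliminates all partitions of $14$ except the two listed. Once this is verified, the four rows of Table~\ref{14d_vectors}, together with their length and $k_0^2/(e,e)$ entries, are read off directly from the coordinates.
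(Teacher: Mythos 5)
Your proposal is correct and is essentially the paper's own argument: the paper proves this lemma simply by remarking that it ``proceeds in the same way as the proof of Proposition~\ref{vector_prop}'', and your write-up is exactly that argument carried out explicitly for $n\ge 14$ (the exclusion of the values $\tfrac13,\tfrac23,\tfrac43,\tfrac32$ by divisibility, the rejection of $v_0+\cdots+v_4$ against $e_{n+2}$, and the two admissible partitions of $14$ at value $2$ all match the computations in Section~\ref{results}). The arithmetic checks out, so no correction is needed.
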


The proof of this lemma proceeds in the same way as the proof of Proposition~\ref{vector_prop}.

Consider the Coxeter scheme produced by taking the vectors in Table~\ref{14d_vectors} on top of the polyhedral angle. This Coxeter scheme (Figure~\ref{coxeter_scheme_14d}) describes a polyhedron which has infinite volume, and it can be used to prove Proposition~\ref{noreflgroupsge14}.

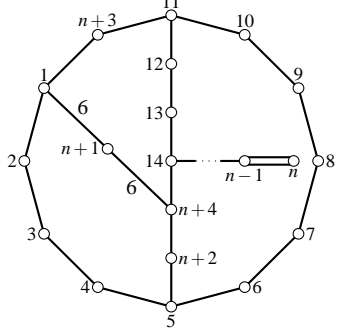
\begin{figure}
\centering
\caption{\label{coxeter_scheme_14d}The Coxeter scheme of the polyhedral angle along with the vectors in Table~\ref{14d_vectors}}
  \begin{tikzpicture}
    \draw[thick] (0,0) -- (0.9659258 , -0.2588190);
    \draw[thick] (0,0) -- (-0.9659258 , -0.2588190);
    \draw[thick] (0.9659258 , -0.2588190) -- (1.6730326, -0.9659258);
    \draw[thick] (-0.9659258 , -0.2588190) -- (-1.6730326, -0.9659258);    
    \draw[thick] (1.6730326, -0.9659258) -- (1.9318517, -1.9318517);
    \draw[thick] (-1.6730326, -0.9659258) -- (-1.9318517, -1.9318517);    
    \draw[thick] (1.9318517, -1.9318517) -- (1.6730326, -2.8977775);
    \draw[thick] (-1.9318517, -1.9318517) -- (-1.6730326, -2.8977775);    
    \draw[thick] (1.6730326, -2.8977775) -- (0.9659258, -3.6048843);
    \draw[thick] (-1.6730326, -2.8977775) -- (-0.9659258, -3.6048843);    
    \draw[thick] (0.9659258, -3.6048843) -- (0, -3.8637033);
    \draw[thick] (-0.9659258, -3.6048843) -- (0, -3.8637033);    
    \draw[thick] (0, -3.8637033) -- (0, -3.21975275);    
    \draw[thick] (0, -3.2197527) -- (0, -2.5758022);    
    \draw[thick] (0, -2.5758022) -- (0, -1.9318516);   
    \draw[thick] (0, -1.9318516) -- (0, -1.2879011);    

    \draw[thick] (0, -1.9318516) -- (0.321975275, -1.9318516);    
    \draw[dotted] (0.321975275, -1.9318516) -- (0.6439505, -1.9318516);    
    \draw[thick] (0.6439505, -1.9318516) -- (0.965925775, -1.9318516);    
    \draw[thick, double distance = 2pt] (0.965925775, -1.9318516) -- (1.609876275, -1.9318516);    

    \draw[thick] (0, -1.2879011) -- (0, -0.6439505);    
    \draw[thick] (0, -0.6439505) -- (0, 0);    

    \draw[thick] (-1.6730326, -0.9659258) -- (-0.8365163, -1.770864) node[xshift = -0.325cm, yshift = 0.55cm] {6};    
    \draw[thick] (-0.8365163, -1.770864) -- (0, -2.5758022) node[xshift = -0.525cm, yshift = 0.3cm] {6};    

    \filldraw[fill=white] (0 , 0) circle (2pt) node [above] {\scriptsize 11}; 
    \filldraw[fill=white] (0.9659258 , -0.2588190) circle (2pt) node [above] {\scriptsize 10}; 
    \filldraw[fill=white] (-0.9659258 , -0.2588190) circle (2pt) node [above] {\scriptsize $n+3$};
    \filldraw[fill=white] (1.6730326, -0.9659258) circle (2pt) node [above] {\scriptsize 9}; 
    \filldraw[fill=white] (-1.6730326, -0.9659258) circle (2pt) node [above] {\scriptsize 1}; 
    \filldraw[fill=white] (1.9318517, -1.9318517) circle (2pt) node [right] {\scriptsize 8}; 
    \filldraw[fill=white] (-1.9318517, -1.9318517) circle (2pt) node [left] {\scriptsize 2}; 
    \filldraw[fill=white] (1.6730326, -2.8977775) circle (2pt) node [right] {\scriptsize 7}; 
    \filldraw[fill=white] (-1.6730326, -2.8977775) circle (2pt) node [left] {\scriptsize 3}; 
    \filldraw[fill=white] (0.9659258, -3.6048843) circle (2pt) node [right] {\scriptsize 6}; 
    \filldraw[fill=white] (-0.9659258, -3.6048843) circle (2pt) node [left] {\scriptsize 4}; 
    \filldraw[fill=white] (0, -3.8637033) circle (2pt) node [below] {\scriptsize 5}; 
    \filldraw[fill=white] (0, -3.2197527) circle (2pt) node [right] {\scriptsize $n+2$}; 
    \filldraw[fill=white] (0, -2.5758022) circle (2pt) node [right] {\scriptsize $n+4$}; 
    \filldraw[fill=white] (0, -1.9318516) circle (2pt) node [left] {\scriptsize 14};  
    \filldraw[fill=white] (0, -1.2879011) circle (2pt) node [left] {\scriptsize 13}; 
    \filldraw[fill=white] (0, -0.6439505) circle (2pt) node [left] {\scriptsize 12};  

    \filldraw[fill=white] (0.965925775, -1.9318516) circle (2pt) node [below] {\scriptsize $n-1$}; 
    \filldraw[fill=white] (1.609876275, -1.9318516) circle (2pt) node [below] {\scriptsize $n$}; 

    \filldraw[fill=white] (-0.8365163, -1.770864) circle (2pt) node [left] {\scriptsize $n+1$};  
  \end{tikzpicture}
\end{figure}

A parabolic subgraph of this diagram is a pair of copies of $\tilde{E_6}$ (vertexes 1, 9, 10, 11, 12, 13, $n+3$; and 3, 4, 5, 6, 7, $n+2$, $n+4$), which will be denoted $\Gamma_p$. $\Gamma_p$ has rank 12, and in order for the polyhedron to have finite volume, it must be extended to have rank $n-1$.

\begin{figure}
\centering
\caption{\label{isolateE6}Isolating $\Gamma_p$ as a parabolic subgraph of Figure~\ref{coxeter_scheme_14d}.}
  \begin{tikzpicture}
    \draw[thick] (0,0) -- (0.9659258 , -0.2588190);
    \draw[thick] (0,0) -- (-0.9659258 , -0.2588190);
    \draw[thick] (0.9659258 , -0.2588190) -- (1.6730326, -0.9659258);
    \draw[thick] (-0.9659258 , -0.2588190) -- (-1.6730326, -0.9659258);    

    \draw[thick] (1.6730326, -2.8977775) -- (0.9659258, -3.6048843);
    \draw[thick] (-1.6730326, -2.8977775) -- (-0.9659258, -3.6048843);    
    \draw[thick] (0.9659258, -3.6048843) -- (0, -3.8637033);
    \draw[thick] (-0.9659258, -3.6048843) -- (0, -3.8637033);    
    \draw[thick] (0, -3.8637033) -- (0, -3.21975275);    
    \draw[thick] (0, -3.2197527) -- (0, -2.5758022);    

    \draw[thick] (0, -1.9318516) -- (0.321975275, -1.9318516);    
    \draw[dotted] (0.321975275, -1.9318516) -- (0.6439505, -1.9318516);    
    \draw[thick] (0.6439505, -1.9318516) -- (0.965925775, -1.9318516);    
    \draw[thick, double distance = 2pt] (0.965925775, -1.9318516) -- (1.609876275, -1.9318516);    

    \draw[thick] (0, -1.2879011) -- (0, -0.6439505);    
    \draw[thick] (0, -0.6439505) -- (0, 0);    

    \filldraw[fill=white] (0 , 0) circle (2pt) node [above] {\scriptsize 11}; 
    \filldraw[fill=white] (0.9659258 , -0.2588190) circle (2pt) node [above] {\scriptsize 10}; 
    \filldraw[fill=white] (-0.9659258 , -0.2588190) circle (2pt) node [above] {\scriptsize $n+3$};
    \filldraw[fill=white] (1.6730326, -0.9659258) circle (2pt) node [above] {\scriptsize 9}; 
    \filldraw[fill=white] (-1.6730326, -0.9659258) circle (2pt) node [above] {\scriptsize 1}; 
    \filldraw[fill=white] (1.6730326, -2.8977775) circle (2pt) node [right] {\scriptsize 7}; 
    \filldraw[fill=white] (-1.6730326, -2.8977775) circle (2pt) node [left] {\scriptsize 3}; 
    \filldraw[fill=white] (0.9659258, -3.6048843) circle (2pt) node [right] {\scriptsize 6}; 
    \filldraw[fill=white] (-0.9659258, -3.6048843) circle (2pt) node [left] {\scriptsize 4}; 
    \filldraw[fill=white] (0, -3.8637033) circle (2pt) node [below] {\scriptsize 5}; 
    \filldraw[fill=white] (0, -3.2197527) circle (2pt) node [right] {\scriptsize $n+2$}; 
    \filldraw[fill=white] (0, -2.5758022) circle (2pt) node [right] {\scriptsize $n+4$}; 
    \filldraw[fill=white] (0, -1.9318516) circle (2pt) node [left] {\scriptsize 15};  
    \filldraw[fill=white] (0, -1.2879011) circle (2pt) node [left] {\scriptsize 13}; 
    \filldraw[fill=white] (0, -0.6439505) circle (2pt) node [left] {\scriptsize 12};  

    \filldraw[fill=white] (0.965925775, -1.9318516) circle (2pt) node [below] {\scriptsize $n-1$}; 
    \filldraw[fill=white] (1.609876275, -1.9318516) circle (2pt) node [below] {\scriptsize $n$}; 
  \end{tikzpicture}
\end{figure}
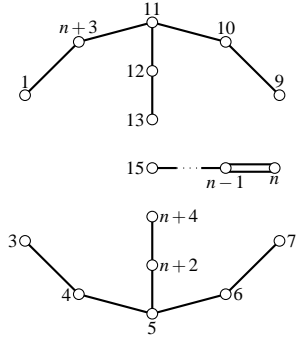

Deleting the vertexes which are connected to $\Gamma_p$ demonstrates that there are three connected components, shown in Figure~\ref{isolateE6} (note that when $n=14$ there are only two connected components). The third component is a copy of the elliptic graph $B_{n-14}$ (note that in 15 dimensions the third component is a copy of the elliptic graph $A_1$). Therefore new vertexes must be added to make another parabolic subgraph (possibly containing the elliptic graph) of rank $n-13$. These new vertexes must not have edges to $\Gamma_p$, otherwise they will immediately be deleted while isolating the parabolic subgraph.

Therefore the inner product of the new vectors with the vectors comprising $\Gamma_p$ must be zero.

\begin{proof}{(Proposition~\ref{noreflgroupsge14})}
The new vector $e$ will be written as
\[e = \sum_{i=0}^n k_iv_i.\]

All of the vectors numbered $1$-$(n-1)$ are of the form $-v_i + v_{i+1}$ and as $e$ must have zero inner product with the vertexes of the $\Gamma_p$ labelled 1, 3, 4, 5, 6, 7, 9, 10, 11, 12, 13,  we will define
\begin{eqnarray*}
k_1 = k_2  &=: m,\\
k_3 = k_4 = k_5 = k_6 = k_7 = k_8 &=: p,\\
k_9 = k_{10} = k_{11} = k_{12} = k_{13} = k_{14} &=: q.
\end{eqnarray*}

Consider the vertex labelled $(n+2)$.  If $e$ has zero inner product with the $v_0 + v_1 + v_2 + v_3 + v_4 + v_5$ it implies that
\[3 k_0 = 2m + 3p.\]

Now consider the vertex labelled $(n+3)$. Similarly we get
\[6k_0 = 3m + 6p + 3q.\]

Finally, consider the vertex labelled $(n+4)$. We get
\[6k_0 = 2m + 6p + 6q.\]

These last two expressions can be subtracted from one another to show that
\[3q = m,\]
which implies that
\[k_0 = 2q + p,\]
hence  we can write $e$ as
\begin{equation}\label{vector}
e = (2q + p) v_0 + 3q(v_1 + v_2) + p(v_3 + v_4 + \ldots + v_8) + q(v_9 + v_{10} + \ldots + v_{14}) + \sum_{i=15}^n k_i v_i.
\end{equation}

This vector has (squared) length
\begin{equation}\label{squared_length}
|e|^2 = 3(p-2q)^2 + \sum_{i=15}^n k_i^2.
\end{equation}

By the crystallographic condition, this quantity must be 1, 2, 3, or 6, and if it is equal to 3 or 6 then all of the coefficients (including $p$ and $q$) must be divisible by 3. Therefore (\ref{squared_length}) is given by
\[|e|^2 = 27(p^\prime-2q^\prime)^2 + 9\sum_{i=15}^n {k_i^\prime}^2,\]
where $p = 3p^\prime$, $q = 3q^\prime$, and $k_i = 3{k_i^\prime}$. This cannot equal 3 or 6.

By (\ref{vector}), $p \ge q > 0$, and $q \ge k_{15} \ge \ldots \ge k_{n-1} \ge k_n \ge 0$, so in 14 dimensions, (\ref{squared_length}) cannot equal 1 or 2. Therefore, in 14 dimensions, the algorithm does not produce a polyhedron of finite volume. In 15 dimensions the vector can be of length 1 if $p = 2q$ and $k_{15} = 1$, and in higher dimensions the vector can be of length 2 if in addition, $k_{16}$ is also 1. For fixed $k_0$ (as in this case) the longer vector represents a closer mirror, and so in dimension $\ge 16$ we must consider $e$ to have length 2.

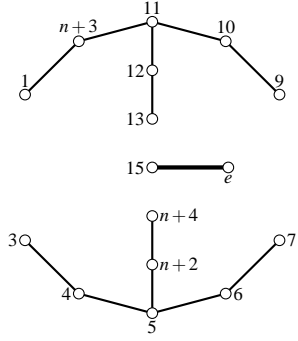
\begin{figure}
\centering
\caption{\label{15_dim}Including the vector $e$ in 15 dimensions}
  \begin{tikzpicture}
    \draw[thick] (0,0) -- (0.9659258 , -0.2588190);
    \draw[thick] (0,0) -- (-0.9659258 , -0.2588190);
    \draw[thick] (0.9659258 , -0.2588190) -- (1.6730326, -0.9659258);
    \draw[thick] (-0.9659258 , -0.2588190) -- (-1.6730326, -0.9659258);    

    \draw[thick] (1.6730326, -2.8977775) -- (0.9659258, -3.6048843);
    \draw[thick] (-1.6730326, -2.8977775) -- (-0.9659258, -3.6048843);    
    \draw[thick] (0.9659258, -3.6048843) -- (0, -3.8637033);
    \draw[thick] (-0.9659258, -3.6048843) -- (0, -3.8637033);    
    \draw[thick] (0, -3.8637033) -- (0, -3.21975275);    
    \draw[thick] (0, -3.2197527) -- (0, -2.5758022);    

    \draw[ultra thick] (0, -1.9318516) -- (1, -1.9318516);    

    \draw[thick] (0, -1.2879011) -- (0, -0.6439505);    
    \draw[thick] (0, -0.6439505) -- (0, 0);    

    \filldraw[fill=white] (0 , 0) circle (2pt) node [above] {\scriptsize 11}; 
    \filldraw[fill=white] (0.9659258 , -0.2588190) circle (2pt) node [above] {\scriptsize 10}; 
    \filldraw[fill=white] (-0.9659258 , -0.2588190) circle (2pt) node [above] {\scriptsize $n+3$};
    \filldraw[fill=white] (1.6730326, -0.9659258) circle (2pt) node [above] {\scriptsize 9}; 
    \filldraw[fill=white] (-1.6730326, -0.9659258) circle (2pt) node [above] {\scriptsize 1}; 
    \filldraw[fill=white] (1.6730326, -2.8977775) circle (2pt) node [right] {\scriptsize 7}; 
    \filldraw[fill=white] (-1.6730326, -2.8977775) circle (2pt) node [left] {\scriptsize 3}; 
    \filldraw[fill=white] (0.9659258, -3.6048843) circle (2pt) node [right] {\scriptsize 6}; 
    \filldraw[fill=white] (-0.9659258, -3.6048843) circle (2pt) node [left] {\scriptsize 4}; 
    \filldraw[fill=white] (0, -3.8637033) circle (2pt) node [below] {\scriptsize 5}; 
    \filldraw[fill=white] (0, -3.2197527) circle (2pt) node [right] {\scriptsize $n+2$}; 
    \filldraw[fill=white] (0, -2.5758022) circle (2pt) node [right] {\scriptsize $n+4$}; 
    \filldraw[fill=white] (0, -1.9318516) circle (2pt) node [left] {\scriptsize 15};  
    \filldraw[fill=white] (0, -1.2879011) circle (2pt) node [left] {\scriptsize 13}; 
    \filldraw[fill=white] (0, -0.6439505) circle (2pt) node [left] {\scriptsize 12};  
    \filldraw[fill=white] (1, -1.9318516) circle (2pt) node [below] {\scriptsize $e$};  

  \end{tikzpicture}
\end{figure}

\begin{figure}
\centering
\caption{\label{16_dim}Including the vector $e$ in 16 dimensions}
  \begin{tikzpicture}
    \draw[thick] (0,0) -- (0.9659258 , -0.2588190);
    \draw[thick] (0,0) -- (-0.9659258 , -0.2588190);
    \draw[thick] (0.9659258 , -0.2588190) -- (1.6730326, -0.9659258);
    \draw[thick] (-0.9659258 , -0.2588190) -- (-1.6730326, -0.9659258);    

    \draw[thick] (1.6730326, -2.8977775) -- (0.9659258, -3.6048843);
    \draw[thick] (-1.6730326, -2.8977775) -- (-0.9659258, -3.6048843);    
    \draw[thick] (0.9659258, -3.6048843) -- (0, -3.8637033);
    \draw[thick] (-0.9659258, -3.6048843) -- (0, -3.8637033);    
    \draw[thick] (0, -3.8637033) -- (0, -3.21975275);    
    \draw[thick] (0, -3.2197527) -- (0, -2.5758022);    

    \draw[thick, double distance = 2pt] (0, -1.9318516) -- (1, -1.9318516);    
    \draw[thick, double distance = 2pt] (1, -1.9318516) -- (2, -1.9318516);    

    \draw[thick] (0, -1.2879011) -- (0, -0.6439505);    
    \draw[thick] (0, -0.6439505) -- (0, 0);    

    \filldraw[fill=white] (0 , 0) circle (2pt) node [above] {\scriptsize 11}; 
    \filldraw[fill=white] (0.9659258 , -0.2588190) circle (2pt) node [above] {\scriptsize 10}; 
    \filldraw[fill=white] (-0.9659258 , -0.2588190) circle (2pt) node [above] {\scriptsize $n+3$};
    \filldraw[fill=white] (1.6730326, -0.9659258) circle (2pt) node [above] {\scriptsize 9}; 
    \filldraw[fill=white] (-1.6730326, -0.9659258) circle (2pt) node [above] {\scriptsize 1}; 
    \filldraw[fill=white] (1.6730326, -2.8977775) circle (2pt) node [right] {\scriptsize 7}; 
    \filldraw[fill=white] (-1.6730326, -2.8977775) circle (2pt) node [left] {\scriptsize 3}; 
    \filldraw[fill=white] (0.9659258, -3.6048843) circle (2pt) node [right] {\scriptsize 6}; 
    \filldraw[fill=white] (-0.9659258, -3.6048843) circle (2pt) node [left] {\scriptsize 4}; 
    \filldraw[fill=white] (0, -3.8637033) circle (2pt) node [below] {\scriptsize 5}; 
    \filldraw[fill=white] (0, -3.2197527) circle (2pt) node [right] {\scriptsize $n+2$}; 
    \filldraw[fill=white] (0, -2.5758022) circle (2pt) node [right] {\scriptsize $n+4$}; 
    \filldraw[fill=white] (0, -1.9318516) circle (2pt) node [left] {\scriptsize 15};  
    \filldraw[fill=white] (0, -1.2879011) circle (2pt) node [left] {\scriptsize 13}; 
    \filldraw[fill=white] (0, -0.6439505) circle (2pt) node [left] {\scriptsize 12};  

    \filldraw[fill=white] (1, -1.9318516) circle (2pt) node [below] {\scriptsize $16$}; 
    \filldraw[fill=white] (2, -1.9318516) circle (2pt) node [below] {\scriptsize $e$}; 
  \end{tikzpicture}
\end{figure}

\begin{figure}
\centering
\caption{\label{ge17_dim}Including the vector $e$ in $\ge 17$ dimensions}
  \begin{tikzpicture}
    \draw[thick] (0,0) -- (0.9659258 , -0.2588190);
    \draw[thick] (0,0) -- (-0.9659258 , -0.2588190);
    \draw[thick] (0.9659258 , -0.2588190) -- (1.6730326, -0.9659258);
    \draw[thick] (-0.9659258 , -0.2588190) -- (-1.6730326, -0.9659258);    

    \draw[thick] (1.6730326, -2.8977775) -- (0.9659258, -3.6048843);
    \draw[thick] (-1.6730326, -2.8977775) -- (-0.9659258, -3.6048843);    
    \draw[thick] (0.9659258, -3.6048843) -- (0, -3.8637033);
    \draw[thick] (-0.9659258, -3.6048843) -- (0, -3.8637033);    
    \draw[thick] (0, -3.8637033) -- (0, -3.21975275);    
    \draw[thick] (0, -3.2197527) -- (0, -2.5758022);    

    \draw[thick] (0, -1.9318516) -- (1, -1.9318516);    
    \draw[thick] (1, -1.9318516) -- (1.333, -1.9318516);    
    \draw[thick] (1, -1.9318516) -- (1, -0.9318516);    
    \draw[thick] (1, -1.9318516) -- (1.333, -1.9318516);    
    \draw[dotted] (1.333, -1.9318516) -- (1.666, -1.9318516);    
    \draw[thick] (1.666, -1.9318516) -- (2, -1.9318516);    
    \draw[thick, double distance = 2pt] (2, -1.9318516) -- (3, -1.9318516);    

    \draw[thick] (0, -1.2879011) -- (0, -0.6439505);    
    \draw[thick] (0, -0.6439505) -- (0, 0);    

    \filldraw[fill=white] (0 , 0) circle (2pt) node [above] {\scriptsize 11}; 
    \filldraw[fill=white] (0.9659258 , -0.2588190) circle (2pt) node [above] {\scriptsize 10}; 
    \filldraw[fill=white] (-0.9659258 , -0.2588190) circle (2pt) node [above] {\scriptsize $n+3$};
    \filldraw[fill=white] (1.6730326, -0.9659258) circle (2pt) node [above] {\scriptsize 9}; 
    \filldraw[fill=white] (-1.6730326, -0.9659258) circle (2pt) node [above] {\scriptsize 1}; 
    \filldraw[fill=white] (1.6730326, -2.8977775) circle (2pt) node [right] {\scriptsize 7}; 
    \filldraw[fill=white] (-1.6730326, -2.8977775) circle (2pt) node [left] {\scriptsize 3}; 
    \filldraw[fill=white] (0.9659258, -3.6048843) circle (2pt) node [right] {\scriptsize 6}; 
    \filldraw[fill=white] (-0.9659258, -3.6048843) circle (2pt) node [left] {\scriptsize 4}; 
    \filldraw[fill=white] (0, -3.8637033) circle (2pt) node [below] {\scriptsize 5}; 
    \filldraw[fill=white] (0, -3.2197527) circle (2pt) node [right] {\scriptsize $n+2$}; 
    \filldraw[fill=white] (0, -2.5758022) circle (2pt) node [right] {\scriptsize $n+4$}; 
    \filldraw[fill=white] (0, -1.9318516) circle (2pt) node [left] {\scriptsize 15};  
    \filldraw[fill=white] (0, -1.2879011) circle (2pt) node [left] {\scriptsize 13}; 
    \filldraw[fill=white] (0, -0.6439505) circle (2pt) node [left] {\scriptsize 12};  

    \filldraw[fill=white] (1, -1.9318516) circle (2pt) node [below] {\scriptsize $16$}; 
    \filldraw[fill=white] (1, -0.9318516) circle (2pt) node [right] {\scriptsize $e$}; 
    \filldraw[fill=white] (2, -1.9318516) circle (2pt) node [below] {\scriptsize $n-1$};
    \filldraw[fill=white] (3, -1.9318516) circle (2pt) node [below] {\scriptsize $n$}; 
  \end{tikzpicture}
\end{figure}

As can be seen in Figure~\ref{15_dim} (respectively Figure~\ref{16_dim}; Figure~\ref{ge17_dim}), in 15 (respectively 16; $\ge 17$) dimensions, $e$ forms a copy of $\tilde{A_1}$ (respectively $\tilde{C_2}$; $\tilde{B_{n-14}}$) with the vertex(es) labelled 15 (respectively 15 and 16; 15, 16, $\ldots, n$). Along with the copies of $\tilde{E_6}$, this parabolic subgraph has rank 13 (respectively 14; $n-2$), which is insufficent to produce a finite volume polyhedron. New vectors still have to satisfy all of the above constraints, and are therefore of the form (\ref{vector}), but they must now also have zero inner product with $e_{15} = -v_{15}$ (respectively $e_{15} = -v_{15} + v_{16}$ and $e_{16} = -v_{16}$; $e_i = -v_i + v_{i+1}$, $15 \le i \le n-1$  and $e_n = -v_n$), so $k_{15}$ must be zero (respectively $k_{15}$ and $k_{16}$; $k_i$, $i \ge 15$). Therefore the vector must satisfy
\[|e|^2 = 3(p-2q)^2 = 1 \text{ or } 2\]
which, as we have already seen, is impossible. Therefore, in $\ge 14$ dimensions, the algorithm does not terminate.

There is no possibility of enlarging $\Gamma_p$ into a parabolic graph of rank $n-1$, and the polyhedron will have infinite volume for $n\ge 14$, so there are no further hyperbolic reflective lattices associated to this quadratic form. This completes the proof of Theorem~\ref{theorem1}.
\end{proof}

\begin{acknowledgements}
The author would like to thank Mikhail Belolipetsky for many valuable discussions, and {\`E.} Vinberg for his helpful comment on a previous version of this article.
\end{acknowledgements}

\bibliographystyle{spmpsci}      
\bibliography{references}   

\end{document}